\theoremstyle{definition}
\theoremstyle{remark}
\numberwithin{equation}{section}
\newcommand{\set}[1]{\left\{#1\right\}}
\newcommand{\suchthat}{\ensuremath{\, \vert \,}}
\newcommand{\R}{\mathbb R}
\newcommand{\C}{\mathbb C}
\newcommand{\LL}{ {\mathcal L} }
\def\TagOnRight
\def\R {\mathbb{R}}
\newcommand{\be}{\begin{equation}}
\newcommand{\ee}{\end{equation}}
\newcommand{\bea}{\begin{eqnarray}}
\newcommand{\eea}{\end{eqnarray}}
\newcommand{\Bea}{\begin{eqnarray*}}
\newcommand{\Eea}{\end{eqnarray*}}
\newcommand{\bt}{\begin{Theorem}}
\newcommand{\et}{\end{Theorem}}
\newcommand{\bpr}{\begin{Proposition}}
\newcommand{\epr}{\end{Proposition}}
\newcommand{\bl}{\begin{Lemma}}
\newcommand{\el}{\end{Lemma}}
\newcommand{\bi}{\begin{itemize}}
\newcommand{\ei}{\end{itemize}}
\newtheorem{Definition}{Definition}[section]
\newtheorem{Theorem}[Definition]{Theorem}
\newtheorem{Lemma}[Definition]{Lemma}
\newtheorem{Proposition}[Definition]{Proposition}
\newtheorem{Remark}[Definition]{Remark}
\begin{document}
\baselineskip16pt

\title[]{ Nonlinear Schr\"{o}dinger equation for the twisted Laplacian  }
\author{P. K. Ratnakumar and Vijay Kumar Sohani}%
\address {Harish-Chandra Research Institute, Allahabad-211019
 India}
 \email { ratnapk@hri.res.in,sohani@hri.res.in}
\subjclass[2010]{Primary 42B37, Secondary 35G20, 35G25}
\keywords{ Twisted Laplacian(special hermite operator), Nonlinear Schr\"{o}dinger equation, 
Strichartz estimates, well posedness }%

\begin{abstract} We establish the local well posedness of 
solution to the nonlinear Schr\"{o}dinger equation associated 
to the twisted Laplacian on $\C^n$ in certain first order Sobolev space. 
Our approach is based on Strichartz type estimates, and is valid  for a  general class of 
nonlinearities including power type. The case $n=1$ represents the magnetic 
Schr\"{o}dinger equation in the plane with magnetic potential $A(z)=iz,~z\in\C$.
\end{abstract}

\maketitle

\section{Introduction}

The free Schr\"{o}dinger equation on $\R^n$ is the PDE
$$ i \partial_t \psi(x,t) + \Delta \psi(x,t)=0, ~~~~~~~~~~~~x \in \R^n, ~t \in
\R$$ which  gives the quantum mechanical description  of the evolution of a free particle in $\R^n$.
If $\psi$ is the solution of the Schr\"{o}dinger equation, then
$|\psi(x,t)|^2$ is interpreted as the probability density for finding the position of the particle
in $\R^n$ at a given time $t$. Let us consider the initial value problem
 \bea
  i \partial_t u(x,t) + \Delta u(x,t)&=&0, ~~~~~~~~~~~~x \in \R^n, ~t \in
\R \\
u(x,0)&=& f(x).
\eea
For $f \in L^2(\R^n)$, the solution is given by the Fourier transform:
$$u(x,t)= \int_{\R^n} e^{-it |\xi|^2} \, \hat{f}(\xi) \, e^{ix \xi} \,d \xi. $$ This may be written as  
$$u(x,t)=e^{it\Delta}f(x)$$
interpreting the Fourier inversion formula as the spectral decomposition in terms of the eigenfunctions of the Laplacian, see \cite{S},\cite{S1}.
 
More generally for any self adjoint differential operator $L$ on $\R^n$, having the spectral representation $ L= \int_E \lambda\, dP_\lambda,$
we can associate the  Schr\"{o}dinger propagator  $ \{e^{-itL} : t \in \R \}$ given by
\bea e^{-itL}f= \int_E e^{-it\lambda} dP_\lambda(f)\eea for $f
\in L^2(\R^n)$. Here $dP_\lambda$ denote the spectral
projection for $L$, i.e., a projection valued measure supported on
the spectrum $E $ of $L$, see \cite{RS}.

In this case, the  function $u(x,t) = e^{-it L}f(x)$  solves the initial value problem
for the Schr\"{o}dinger equation for the operator $L$:
\bea i \partial_t u(x,t) -L u(x,t)&=& 0, ~x \in \R^n, ~t \in
\R \\
u(x,0) &=& f(x)
\eea with $L$ now representing the corresponding Hamiltonian of the quantum mechanical system. 

The significance of this view point is that, most Hamiltonians of interest, namely  the perturbation of the Laplacian with a potential $V$ 
(of the form $L =- \Delta +V$) or the magnetic Laplacian corresponding to the magnetic potential $(A_1(x),..., A_n(x))$  (of the form
$L= \sum_{j=1}^n \left(i \partial_{x_j} + A_j(x) \right)^2)$ on $\R^n$, can be analysed with our approach, in terms of  the spectral theory 
of the Hamiltonian. 

\hskip.1in
In this paper, we concentrate on Schr\"{o}dinger equation for an interesting magnetic Laplacian, namely the twisted Laplacian on $\C^n$; 
also known as the special Hermite operator. The twisted Laplacian ${\mathcal L}$ on $\C^n$ is given by
\Bea {\mathcal L}=\frac{1}{2}\,
\sum_{j=1}^n\left(Z_j\overline{Z}_j+\overline{Z}_jZ_j\right)\Eea where 
$Z_j= \frac{\partial}{\partial z_j} + \frac{1}{2}\bar{z}_j,~
\overline{Z}_j= -\frac{\partial}{\partial \bar{z}_j} + \frac{1}{2}
z_j, ~ j=1,2,\ldots,n.$ Here $\frac{\partial}{\partial z_j}$ and
$\frac{\partial}{\partial \bar{z}_j}$  denote the complex derivatives 
$\frac{\partial}{\partial_{x_j}} \mp i \frac{\partial}{\partial_{y_j}}$ 
respectively. The  operator $\LL$ may be viewed as the complex analogue 
of the quantum harmonic oscillator Hamiltonian $H= -\Delta+|x|^2$ on
$\R^n$, which has the representation $$H=
\frac{1}{2}\sum_{j=1}^n\left(A_jA_j^*+A_j^*A_j\right)$$ in terms of the
creation operators $A_j=- \frac{d}{dx_j}+x_j$ and the annihilation
operators $A_j^*= \frac{d}{dx_j}+x_j,~ j=1,2,\ldots, n$.  The operator 
$\LL$  was introduced by R. S. Strichartz \cite{S1}, and called the  
special Hermite operator and it looks quite similar to the Hermite operator on $\C^n$. In fact
$$ {\mathcal L}=-\Delta + \frac{1}{4}|z|^2 - i
\sum_1^n\left(x_j \frac{\partial}{\partial y_j}-y_j
\frac{\partial}{\partial x_j}\right). $$

This may be re written as
$$\LL =\overset{n}{\underset{j=1}{\sum}} \left[ \left(i \partial_{x_j} 
+\frac{y_j}{2} \right)^2 +\left(i \partial_{y_j} -\frac{x_j}{2} \right)^2 \right]$$
which is of the form $\overset{2n}{\underset{j=1}{\sum}} 
\left[(i \partial_{w_j} -A_j(w))^2  \right]$ hence represents a 
Schr\"{o}dinger operator  on $\C^n$ for the magnetic vector 
potential $A(z) =iz, z \in \C^n$. 

The Schr\"{o}dinger equation for the magnetic potential with magnetic field  decaying at 
 infinity has been studied by many authors, see for instance Yajima \cite{YA}, where he studies the
propagator for the linear equation. In contrast, the nonlinear equation in our situation corresponds to a 
magnetic equation with a constant magnetic field, which has no decay. For more details on general 
magnetic Schr\"{o}dinger equation corresponding to magnetic field without decay, see  \cite{AHS}.

We consider the initial value problem for the nonlinear 
Schr\"{o}dinger equation for the twisted Laplacian 
$\LL$ : \bea \label{mainpde}i\partial_tu(z,t)\!\!\!\!& -\!\!\!&\LL
u(z,t)=G(z, t,u), ~~~~~~~~~~~~z \in \C^n, ~t \in \R \\
\label{inidata}&& u(z,t_0)= f(z) \eea
where $G$ is a suitable $C^1$ function on $\C^n \times \R \times \C$.
When $G\equiv 0$, and  $f\in L^2(\C^n)$ the solution to this
initial value problem is given by $$u(z,t)= e^{-i(t-t_0)\LL}f(z).$$ 
When $G(z,t,u) = g(z,t)$, the solution is given by the Duhamel formula
\bea \label{1.3}
u(z,t)= e^{-i(t-t_0){\mathcal L}} f(z) -i \int_{t_0}^t e^{-i(t-s){\mathcal
L}}g(z,s)ds. \eea  Thus in the linear case, the solution is 
determined once the functions $f$ and $g$ are known.

For simplicity, we take  $t_0=0$. The basic idea in the  nonlinear analysis is the following heuristic 
reasoning based on the above formula. If the solution $u$ is known, 
then one would expect $u$ to satisfy the above equation with $g(z,s)$ 
replaced by $G(z,s,u(z,s))$:
\bea \label{solution} u(z,t)= e^{-it{\mathcal L}} f(z) -i 
\int_0^t e^{-i(t-s){\mathcal L}}G(z,s,u(z,s))ds. \eea

Indeed one can show that $u$ from a reasonable function space satisfies 
a PDE of the form (\ref{mainpde}), (\ref{inidata}), if and only if  $u$ 
satisfies an integral equation of the form (\ref{solution}),  see Lemma 
\ref{equivalence} for a precise version of this fact.

This reduces the existence theorem for the solution to the nonlinear 
Schr\"{o}dinger equation to a fixed point theorem for
the operator 
\bea \label{operator} {\mathcal H}(u)(z,t)=e^{-it\mathcal{L}}f (z)
-i\displaystyle\int_{0}^t e^{-i(t-s)\mathcal{L}}G(z,s,u(z,s))ds \eea
in a 
 suitable subset of the relevant function space. 

The nonlinearity $G$, that we consider is a $C^1$ function of the form
\bea\label{nlc1}G(z,t,w) = \psi(x,y,t,|w|)\, w, ~ (x,y,t,w)
\in \R^n\times \R^n \times \R \times \C,\eea where $z=x+iy
\in\C^n,t\in\R,w\in\C $ and
 $\psi:  \R^n\times \R^n \times \R \times [0,\infty)\to \C$
is assumed to satisfy the following condition: The function 
$F= \psi, \partial_{x} \psi $, $ \partial_{y}\psi$ and $|w|\partial_4\psi(x,y,t, |w|)$, satisfy the inequality
\bea \label{nlc2} |F(x,y,t, |w|)| \leq C |w|^ {\alpha},
\eea for some constant $C$ and $\alpha \in [0, \frac{2}{n-1})$.

The  class of nonlinearity given by (\ref{nlc1}), (\ref{nlc2}) includes in 
particular, power type nonlinearity of the form $|u|^\alpha u$. 
Moreover, the above class seems  to be the most general form of 
nonlinearity adaptable to the Schr\"{o}dinger equation for the 
twisted Laplacian, for local existence via Kato's method \cite{Ka}.
The main difficulties in this approach is caused by the noncommutativity 
of $\LL$ with $\frac{\partial}{\partial x_j},\frac{\partial}{\partial y_j}$
and the noncompactibility of $\LL$ with the powertype nonlinearty as observed in  \cite{CE}.
However we are able to overcome  this difficulty by introducing the appropriate  set of  differential 
operators $L_j$, $M_j, ~j=1,2,...,n$ and working with a suitable  Sobolev space defined using
 these operators,  see Section \ref{regularity}.

We follow Kato's approach, using Strichartz estimates\cite{S},  as indicated above,
to establish local existence.   The advantage of Kato's method is that, it is useful, even when the conservation laws are not available. 
The main Strichartz estimate  for the Schr\"{o}dinger propagator for the twisted Laplacian (i.e., the special hermite operator )
has already been proved in \cite{Ra}. We also need some more relevant 
estimates like the associated retarded estimates etc., which we prove here (Theorem \ref{strichartzs}).

There is a vast literature available for well posedness results for nonlinear Schr\"{o}dinger equation on $\R^n$.
See for instance the papers by Ginibre and Velo, \cite{GV}, \cite{GV1},  Kato \cite{Ka},
the result of Cazenave and Weisler \cite{CW}, the books by Cazenave \cite{C} and Tao\cite{TT} and the extensive 
references therein. Some of the references that we came across dealing with
magnetic Schr\"{o}dinger equation are \cite{YA}, \cite{AHS} and \cite{CE} as mentioned before. In fact, the stability result 
discussed in \cite{CE}, is actually the stability problem for the nonlinear Schr\"{o}dinger equation for the twisted Laplacian in the plane.  

Our main results in this paper is the well posedness for the nonlinear 
Schr\"{o}dinger equation for the twisted Laplacian, in  the  Sobolev space $\tilde{W}^{1,2}(\C^n)$ 
(see Section \ref{regularity} for the definition) and is given by  the following two theorems:

 \bt \label{thm1} (Local existence)
 Assume that $G$ is as in (\ref{nlc1}), (\ref{nlc2}) and
$u(z,0)=f(z)\in \tilde{W}^{1,2}(\C^n)$. Then there exists a number $T=T(\| u_0\|)$ such that the initial 
value problem (\ref{mainpde}), (\ref{inidata}) has a unique solution 
$u \in C([-T,T]; \tilde{W}^{1,2}(\mathbb{C}^n))$.
\et

Note that the interval [-T,T] given by Theorem \ref{thm1} need not be 
the maximal interval on which the solution exists. Now we discuss the uniqueness and stability in $\tilde{W}^{1,2}(\C^n)$
on the maximal interval denoted by $(T_- , T_+)$.

\begin{Definition} Let $\{f_m\}$ be any sequence such that $f_m\to f$ in $\tilde{W}^{1,2}(\C^n)$  and let $u_m$ be the solution corresponding to the initial value $f_m$.
We call the solution to the initial value problem 
(\ref{mainpde})(\ref{inidata})  stable in $\tilde{W}^{1,2}(\C^n)$
if  $u_m\rightarrow u$ in 
$C(I,\tilde{W}^{1,2})$ for any compact interval $I\subset (T_-,T_+)$.
\end{Definition}  

\bt \label{wellposedness} The local solution established in Theorem \ref{thm1}
extends to a maximal interval $(T_-,T_+)$, where the following blowup alternative holds: either $|T_\pm|=\infty$ 
or $\underset{t \to T_\pm}{\lim} \|u(t,\cdot)\|_{\tilde{W}^{1,2}} = \infty$.
Moreover, the solution is unique and stable in $C((T_-,T_+),\tilde{W}^{1,2})$.  
\et
\begin{Remark} In the definition of stability we can consider only the compact intervals and 
not the maximal interval. In fact $u_m$ may not be defined on the maximal interval $(T_-,T_+)$ for $u$.
Also by blowup alternative $u$ may not be in $L^{\infty}((T_-,T_+),\tilde{W}^{1,2})$. 
\end{Remark}

 The paper is organised as follows. In Section \ref{spectral} we discuss the 
spectral theory of the twisted Laplacian $\LL$ and introduce 
the Schr\"{o}dinger propagator $e^{it\LL}$, and prove the relevant Strichartz 
estimates. In Section \ref{regularity} we introduce certain first order Sobolev spaces
associated to the twisted Laplacian, prove an embedding result 
in $L^p $ spaces and other auxiliary estimates. The proofs of the main theorems are
presented in Section \ref{local existence}.\\

\noindent
{\bf Acknowledgements:} 
This work is part of the Ph. D. thesis of the second author. He wishes to thank the Harish-Chandra Research institute,  
the Dept. of Atomic Energy, Govt. of India, for providing excellent research facility. 

\section{Spectral theory of the twisted Laplacian}\label{spectral}
The twisted Laplacian is closely related to the sub Laplacian on the 
Heisenberg group, hence the spectral theory of this operator is 
closely connected with the representation theory of the Heisenberg group.
Here we give a brief review  of the spectral theory of the twisted Laplacian  $\LL$. 
The references for the materials discussed in this section are the following books:
Folland \cite{F}, and Thangavelu \cite{T}, \cite{T1}.

The eigenfunctions of the operator ${\mathcal L}$ are called the
special Hermite functions, which are defined in terms of the
Fourier-Wigner transform. For a pair of functions $f,g \in
L^2(\R^n)$, the Fourier-Wigner transform is defined to be
$$V(f,g)(z)= (2 \pi)^{-\frac{n}{2}} \int_{\R^n} e^{ix \cdot \xi}
f\left(\xi+\frac{y}{2}\right) \overline{g}\left(\xi-\frac{y}{2}\right) d \xi,$$ where $z=x+iy \in
\mathbb{C}^n.$  For any two multi-indices $\mu, \nu$ the special
Hermite functions $\Phi_{\mu \, \nu}$ are given by
$$\Phi_{\mu \, \nu}(z)=V(h_\mu, h_\nu)(z)$$ where $h_\mu$ and $h_\nu$ are Hermite
functions on $\R^n$. Recall that for each nonnegative integer $k$,
the one dimensional Hermite functions $h_k$ are defined by $$
h_k(x)= \frac{(-1)^k}{\sqrt{2^k k!
\sqrt{\pi}}}\left(\frac{d^k}{dx^k}e^{-x^2}\right)
e^{\frac{x^2}{2}}.$$ Now for each multi index $\nu=(
\nu_1,\cdots,\nu_n)$, the n-dimensional Hermite functions are
defined by the tensor product : $$h_\nu(x)= \prod_{i=1}^n
h_{\nu_i}(x_i), ~~~~~~~x=(x_1,\cdots,x_n).$$

A direct computation using the relations  \Bea
\left(-\frac{d}{dx}+x \right)h_k(x)&=& (2k+2)^{\frac{1}{2}}h_{k+1}(x),\\
\left(~~\frac{d}{dx}+x\right)h_k(x)&=& (2k)^{\frac{1}{2}}
h_{k-1}(x)\Eea satisfied by the Hermite functions $h_k$ show that
${\mathcal L}\Phi_{\mu \, \nu}= (2|\nu|+n)\Phi_{\mu \, \nu}$. Hence
$\Phi_{\mu\, \nu}$ are eigenfunctions of ${\mathcal L}$ with
eigenvalue $2|\nu|+n$ and they also form a complete orthonormal
system in $L^2(\C^n)$. Thus every $f \in L^2 (\C^n)$ has the
expansion \bea f= \sum_{\mu, \, \nu} \langle f, \Phi_{\mu \,
\nu}\rangle \Phi_{\mu \, \nu} \eea in terms of the eigenfunctions of
${\mathcal L}$. The above expansion may be written as
\bea\label{2.2} f= \sum_{k=0}^\infty
P_kf\eea where \bea P_kf=\sum_{\mu, |\nu|=k} \langle f, \Phi_{\mu,
\nu}\rangle \Phi_{\mu \nu} \eea is the spectral projection
corresponding to the eigenvalue $2k+n$. Now for any $f\in L^2(\C^n)$
such that ${\mathcal L}f \in L^2(\C^n),$ by self adjointness of
${\mathcal L}$, we have $P_k(\LL f)= (2k+n) P_k f$. It follows that
for $f\in L^2(\C ^n)$ with  $\LL f \in L^2(\C^n)$
 \bea \label{2.4} \LL f=
\sum_{k=0}^\infty (2k+n)P_kf.\eea Thus, we can
define $e^{-it\LL}$ as \bea e^{-it\LL}f = \sum_{k=0}^\infty
e^{-it(2k+n)} P_k f.\eea

Note that $P_kf$ has the compact representation
$$P_kf(z)=(2\pi)^{-n} (f\times\varphi_k)(z) $$ in terms of the Laguerre function
 $\varphi_k(z)=L_k^{n-1}(\frac{1}{2}|z|^2)e^{-\frac{1}{4}|z|^2}$, see \cite{T}.
 Hence formally we can express $e^{-it\LL} $ as a twisted convolution operator: 
 $$e^{-it\LL}f=f\times K_{it}$$ for $f\in \mathcal{S}(\C^n)$
 where  $K_{it}(z)= \frac{(4\pi i)^{-n}}{(\sin t)^n} e^{\frac{i(\cot t)|z|^2}{4}}$.
Crucial to the local existence proof is a Strichartz type estimate for the
Schr\"{o}dinger propagator for the twisted Laplacian.
We start with the following definition
\begin{Definition}\label{addmi}
Let $n\geq 1$. We say that a pair $(q,p)$ is {\it admissible} if 
 \[ 2<q< \infty \,\, {\rm and }\,\, {1\over q}\geq
n\left({1\over 2}-{1\over
p}\right)\geq 0.\]
\end{Definition}
\begin{Remark} The admissibility condition on $(q,p)$ implies that $2\leq p <\frac{2n}{n-1}$.
\end{Remark}
The  Strichartz type estimate for the Schr\"{o}dinger propagator 
for the twisted Laplacian has been established in \cite{Ra} and we state here a variant of 
the lemma on the convolution on the circle 
proved there. The proof follows exactly as in \cite{Ra}.

\begin{Lemma}\label{convcirc}:
Let $K \in \mbox{weak}~ L^\rho([-a,a])$ for some $\rho>1$ and let $T$ be the  operator  given by
$$Tf(t)=\int_{[-a,a]} K(t-s)f(s)ds.$$
Then the following inequality holds
\Bea
\Vert Tf \Vert_q &\leq& C_K \Vert f\Vert_{q'}  ~ ~ \mbox{ for} ~q=2\rho 
\Eea
with  $C_K= C[K]_\rho$, where $[K]_\rho$ denotes the weak $L^{\rho}([-a,a])$ norm of $K$. 
\end{Lemma}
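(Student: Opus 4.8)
The plan is to recognise Lemma \ref{convcirc} as a weak-type Young (equivalently Hardy--Littlewood--Sobolev) inequality for convolution against a kernel in weak $L^\rho$, and to prove it by real interpolation. The first thing I would record is the exponent bookkeeping: for $q=2\rho$ one has $\frac{1}{q'}+\frac1\rho = 1+\frac1q$, equivalently $\frac1q=\frac{1}{q'}+\frac1\rho-1$, which is exactly the scaling under which convolution with a function in $L^\rho$ sends $L^{q'}$ into $L^q$ by the ordinary Young inequality. The content of the lemma is the upgrade of this mapping property to $K$ lying only in weak $L^\rho([-a,a])$, with operator norm controlled by the weak norm $[K]_\rho$.

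The core is a two-step Marcinkiewicz argument. In the first step I fix $f$ and use that $T$ is linear in the kernel $K$. Splitting $K=K\mathbf 1_{\{|K|>\lambda\}}+K\mathbf 1_{\{|K|\le\lambda\}}$ at a height $\lambda>0$, the distribution-function estimate that defines $[K]_\rho$ puts the two pieces in $L^{\rho_0}$ and $L^{\rho_1}$ for any $\rho_0<\rho<\rho_1$, with norms equal to explicit powers of $\lambda$ times $[K]_\rho$. Applying the ordinary Young inequality to each piece, bounding the distribution function of $Tf$ at level $\alpha$ by the sum of the two contributions, and optimising in $\lambda$ yields the weak-type bound $\|Tf\|_{L^{q,\infty}}\le C\,[K]_\rho\,\|f\|_{L^p}$ along the whole scaling line $\frac1p+\frac1\rho=1+\frac1q$. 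In the second step I fix $K$ and interpolate in $f$: choosing $p_0<q'<p_1$ with corresponding $q_0<q<q_1$ on that line, the two weak-type bounds $T\colon L^{p_0}\to L^{q_0,\infty}$ and $T\colon L^{p_1}\to L^{q_1,\infty}$ feed into the Marcinkiewicz interpolation theorem and produce the strong bound $T\colon L^{q'}\to L^q$ with constant $C\,[K]_\rho$, which is the assertion.

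A more economical route, which I would mention as an alternative, is to quote O'Neil's convolution inequality in Lorentz spaces directly: under the same scaling $1+\frac1q=\frac1\rho+\frac{1}{q'}$ it gives $\|K*f\|_{L^{q,q'}}\le C\,[K]_\rho\,\|f\|_{L^{q'}}$, and since $q'\le 2\le q$ for $\rho\ge 1$ the elementary Lorentz embedding $L^{q,q'}\hookrightarrow L^{q,q}=L^q$ closes the argument at once.

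The \emph{genuinely delicate point} is the weak-type bookkeeping in the first step: one must carry the dependence on $[K]_\rho$ (rather than a true $L^\rho$ norm of $K$) through the splitting-and-optimisation, and check that the chosen endpoint exponents stay in the open ranges $1<\rho_0<\rho<\rho_1$ and $1<q_0<q<q_1$ demanded by Marcinkiewicz. A second, more routine, care point is the domain: since $s,t\in[-a,a]$ the difference $t-s$ ranges over $[-2a,2a]$, so the statement is really about convolution on the circle, reflecting the periodicity in $t$ of the propagator $e^{-it\LL}$. I would therefore phrase the whole computation on the torus, where the translation invariance of Lebesgue measure used by both Young's inequality and the interpolation is available, exactly as in \cite{Ra}.
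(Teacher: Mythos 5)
Your proposal is correct and is essentially the paper's own route: the paper gives no self-contained proof of this lemma, deferring to \cite{Ra}, where the bound is obtained by exactly the weak-Young/real-interpolation argument you describe (kernel in weak $L^\rho$, weak-type bounds along the scaling line $\tfrac1p+\tfrac1\rho=1+\tfrac1q$, then Marcinkiewicz; the exponent $q=2\rho$ is precisely the self-dual point $p=q'$ of that line). Your side remark on the domain is also the right reading of the statement: since $t-s$ ranges over $[-2a,2a]$ one must either extend $K$ to that interval (harmless for the kernel $|\sin t|^{-2n(\frac12-\frac1p)}$ actually used) or argue periodically as in \cite{Ra}.
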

We use the compact notation $L_{[-T,T]}^{p,q}$ or simply $L^{p,q}$ for 
$L^q([-T,T],L^p(\C^n))$ and $L^{2,\infty}$ for the mixed $L^p$ space $L^{\infty}([-T,T],L^2(\C^n))$.
The main Strichartz type estimate we require is compiled in the following
\begin{Theorem}\label{strichartzs} Let $f \in L^2(\C^n)$ and $g \in L^{q'} 
([-a,a] ;\, L^{p'} (\C^n) )$ where $(q,p)$ denote an admissible pair with $ q'$ and $p'$ denoting the corresponding
 conjugate indices. 
Then for $0<a<\infty$, the following estimates hold over $[-a,a]\times \C^n$:
\bea\label{mainstrest} \| e^{-it\LL} f \|_{L^{p,q} } &\leq&
C \|f \|_2 \\
\label{stauxest} \left\| \int_0^t e^{-i(t-s)\LL} g(z,s) ds 
\right\|_{L^{p,q}} &\leq& C \|g \|_{L^{p',q'} } \\
 \label{retstrest} \left\| \int_0^t e^{-i(t-s)\LL} g(z,s) ds 
\right\|_{L^{2,\infty}} &\leq& C \,  \|g \|_{L^{p',q'} }
 \eea
with a constant $C$ is independent of $f$ and $g$.
\end{Theorem}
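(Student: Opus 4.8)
The plan is to treat the homogeneous estimate (\ref{mainstrest}) as given (it is the estimate proved in \cite{Ra}) and to derive the two retarded estimates (\ref{stauxest}) and (\ref{retstrest}) from it, using the explicit propagator kernel, the dispersive estimate it yields, Lemma \ref{convcirc}, and a time-ordering argument. First I would record the dispersive estimate. Since $e^{-it\LL}f = f\times K_{it}$ with $K_{it}(z)=(4\pi i)^{-n}(\sin t)^{-n}e^{i(\cot t)|z|^2/4}$, the pointwise bound $|K_{it}(z)|=(4\pi)^{-n}|\sin t|^{-n}$ together with the fact that twisted convolution obeys $|f\times K|\le |f|*|K|$ (the twisting factor is unimodular) gives $\|e^{-it\LL}f\|_\infty\le C|\sin t|^{-n}\|f\|_1$. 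Interpolating this against the unitary bound $\|e^{-it\LL}f\|_2=\|f\|_2$ by Riesz--Thorin yields, for every admissible pair,
\[
\|e^{-it\LL}f\|_p\le \frac{C}{|\sin t|^{\sigma}}\,\|f\|_{p'},\qquad \sigma:=n\Big(1-\tfrac2p\Big)=2n\Big(\tfrac12-\tfrac1p\Big).
\]
The admissibility condition $\tfrac1q\ge n(\tfrac12-\tfrac1p)$ is exactly the statement $\sigma\le \tfrac2q$, which is the numerology that will make the next step work at the endpoint.

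Next I would prove the \emph{untruncated} version of (\ref{stauxest}), namely the bound for $\int_{-a}^a e^{-i(t-s)\LL}g(\cdot,s)\,ds$. Applying Minkowski's inequality in the $z$ variable and then the dispersive estimate gives, with $h(s):=\|g(\cdot,s)\|_{p'}$,
\[
\Big\|\int_{-a}^a e^{-i(t-s)\LL}g(\cdot,s)\,ds\Big\|_{p}\le C\int_{-a}^a\frac{h(s)}{|\sin(t-s)|^{\sigma}}\,ds=C\,(k*h)(t),
\]
where $k(t)=|\sin t|^{-\sigma}$. Using the $\pi$-periodicity of the propagator (so that $|K_{it}|$, and hence $k$, is a function on the circle $\R/\pi\Z$ with a single singularity per period), and noting that $|\sin t|\sim|t|$ near that singularity together with $\sigma\le\tfrac2q$, the kernel $k$ lies in weak $L^{\rho}$ with $\rho=q/2$; here $q>2$ forces $\rho>1$, so Lemma \ref{convcirc} applies and (since $q=2\rho$) gives $\|k*h\|_{L^q_t}\le C[k]_\rho\,\|h\|_{L^{q'}_t}=C\|g\|_{L^{p',q'}}$. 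This is the untruncated estimate. To replace $\int_{-a}^a$ by the time-ordered $\int_0^t$ I would invoke the Christ--Kiselev lemma, which is legitimate precisely because admissibility forces $q>2>q'$; this produces (\ref{stauxest}).

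Finally, (\ref{retstrest}) follows directly from (\ref{mainstrest}) by duality, with no truncation subtlety. Writing $e^{-i(t-s)\LL}=e^{-it\LL}e^{is\LL}$ and using unitarity of $e^{-it\LL}$ on $L^2$, I would reduce to bounding $\sup_t\|\int_0^t e^{is\LL}g(\cdot,s)\,ds\|_2$. Testing against arbitrary $h$ with $\|h\|_2=1$ and moving $e^{is\LL}$ onto $h$,
\[
\Big|\Big\langle\int_0^t e^{is\LL}g(\cdot,s)\,ds,\,h\Big\rangle\Big|\le\int_{-a}^a\|g(\cdot,s)\|_{p'}\,\|e^{-is\LL}h\|_{p}\,ds\le\|g\|_{L^{p',q'}}\,\|e^{-i\cdot\LL}h\|_{L^{p,q}},
\]
where the middle step is H\"older in $z$ and then in $t$; estimate (\ref{mainstrest}) bounds the last factor by $C\|h\|_2$. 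Taking the supremum over $h$ and then over $t$ gives (\ref{retstrest}).

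The step I expect to be the main obstacle is the passage from the untruncated operator to the time-ordered integral $\int_0^t$ in (\ref{stauxest}): the dispersive/convolution argument naturally controls $\int_{-a}^a$, and one must justify that restricting to $s<t$ preserves the bound. Christ--Kiselev does this, but only because $q>q'$. The accompanying delicate points are the endpoint numerology $\sigma\le\tfrac2q$ — which places $k$ only in \emph{weak} $L^{q/2}$, forcing the use of the weak-type convolution estimate of Lemma \ref{convcirc} rather than ordinary Young's inequality — and the bookkeeping of the singularities of $|\sin(t-s)|^{-\sigma}$, cleanly handled by viewing the convolution on the circle.
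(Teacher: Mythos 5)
Your proposal is correct, and its skeleton coincides with the paper's: the dispersive bound $\| e^{-it\LL} f\|_p \leq C\, |\sin t|^{-2n(\frac{1}{2}-\frac{1}{p})} \|f\|_{p'}$, the observation that $|\sin t|^{-2n(\frac{1}{2}-\frac{1}{p})}$ lies in weak $L^{q/2}$ on $[-a,a]$ so that Lemma \ref{convcirc} applies with $q=2\rho$, and the duality argument deducing (\ref{retstrest}) from (\ref{mainstrest}) are all exactly what the paper does. You deviate in two places. First, you derive the $L^1\to L^\infty$ bound from the explicit kernel $K_{it}$ and the pointwise inequality $|f\times K|\leq |f|\ast |K|$; the paper instead obtains it by letting $r\to 0$ in the complex semigroup $e^{-(r+it)\LL}$, precisely because the representation $e^{-it\LL}f=f\times K_{it}$ is only formal (the propagator is defined spectrally), and the regularization is what legitimizes the kernel bound -- your route works for Schwartz $f$ plus density, but that justification should be made explicit. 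Second, and more substantively, you prove the untruncated estimate and then invoke Christ--Kiselev (using $q>2>q'$) to pass to $\int_0^t$. That step is valid but unnecessary, and what you flagged as \emph{the main obstacle} is in fact a non-issue in this setting: the paper applies Minkowski's inequality directly to the time-ordered integral, and since the resulting dominating quantity $\int 2\,|\sin(t-s)|^{-2n(\frac{1}{2}-\frac{1}{p})}\|g(\cdot,s)\|_{p'}\,ds$ is a convolution against a nonnegative kernel, enlarging the region of integration from $\{0\leq s\leq t\}$ (or $\{t\leq s\leq 0\}$) to all of $[-a,a]$ only increases it, after which Lemma \ref{convcirc} finishes the proof of (\ref{stauxest}) with no time-ordering subtlety whatsoever. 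Christ--Kiselev is genuinely needed only when the untruncated bound comes from an argument admitting no pointwise positive domination (abstract $TT^*$ at non-diagonal exponent pairs, or the Keel--Tao endpoint); here the estimate is diagonal and non-endpoint, so positivity does everything. Your version buys the extra generality of non-diagonal retarded estimates; the paper's buys a shorter, self-contained proof.
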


\begin{proof}
The proof of the estimate (\ref{mainstrest}) given in  \cite{Ra} for $a= \pi$, works for any $a<\infty$, 
and relies on the following dispersive estimate for the complex semigroup $ e^{-(r+it)\LL}$  (see \cite{Ra}):
\bea \label{pp'est}\| e^{-(r+it)\LL}f(z) \|_p 
\leq 2 
|\sin t|^{-2n (\frac{1}{2}-\frac{1}{p})} \,\| f\|_{p'}
\eea
valid for all  $f \in L^p(\C^n),~ 2\leq p \leq \infty,~r>0$ combined with a limiting argument as $r\to 0$. 

Now we will give a direct proof of (\ref{pp'est}) for $r=0$, essentially using the regularization argument used in \cite{Ra}, 
to deduce (\ref{stauxest}) and (\ref{pp'est}). The regularisation technique was first introduced in \cite{NR}, \cite{NR1}, see also \cite{Ra1}.
The results in these cases do not follow by the general Strichartz estimates established by Keel and Tao in \cite{KT} by lack of 
dispersive estimate for the kernel for the Schr\"{o}dinger propagator in these cases.  

For $f\in L^1\cap L^2(\C^n),$ we have the isometry $\| e^{-it\LL}f \|_2= \|f\|_2$. Hence, using the series expansion for $ e^{-(r+it)\LL}f$ and appealing to the dominated convergence theorem for the sum, we can see that  for any sequence $r_m\rightarrow 0$, $e^{-(r_m +it)\LL}f\rightarrow e^{-it\LL}f$ in 
$L^2(\C^n)$, it follows that $e^{-(r+it)\LL}f(z) \rightarrow e^{-it\LL}f(z)$ for almost all $z\in \C^n$ as $r\rightarrow 0$. This gives the inequality $\|e^{-it\LL}f(z)\|_{L^{\infty}(\C^n)} 
\leq \frac{2}{|\sin t|^n}\|f\|_{L^1}$ from the corresponding inequality for the complex semigroup $e^{-(r+it)\LL}$. Now the inequality (\ref{pp'est}) for $r=0$ follows by interpolating this  with the $L^2$ isometry. 

The estimate (\ref{stauxest}) can be deduced from the inequality 
(\ref{pp'est}) with $f(z)$ replaced by $g(z,t)$ for each $t$, 
and Lemma \ref{convcirc}. In fact using Minkowski's inequality for integrals
and (\ref{pp'est}) with $r=0$, we get 
\Bea
 \left\| \int_0^t e^{-i(t-s)\LL} g(z,s) ds 
\right\|_{L^p} &\leq& \int _0^a\frac{2}{|\sin (t -s)|^{2n(\frac{1}{2}- \frac{1}{p}) }} \|g (\cdot,s) \|_{L^{p'} } ds. 
\Eea
Also for $p< \frac{2n}{n-1}$, we have $|\sin t|^{-2n(\frac{1}{2}- \frac{1}{p}) } \in$ 
weak $L^\rho[-a,a]$, for any $\rho \leq \frac{1}{2n(\frac{1}{2}- \frac{1}{p})}$. 
Hence by Lemma \ref{convcirc} we see that 
\Bea
 \left\| \int_0^t e^{-i(t-s)\LL} g(z,s) ds 
\right\|_{L^{p,q}} &\leq& C \|g \|_{L^{p',q'} }
\Eea for any $q= 2\rho>2$. Since  $2\rho \leq \frac{1}{n(\frac{1}{2}- \frac{1}{p})},$
the above inequality is valid for any admissible pair, hence (\ref{stauxest}) 

To prove (\ref{retstrest}) take  $h \in {\mathcal S}(\C^n)$. 
By H\"{o}lder's inequality and estimate (\ref{mainstrest}) 
\Bea
\left| \int_{\mathbb {C}^n } \int_0^t e^{i(t-s) \LL} g(z,s) ds \, 
\overline{h(z)} dz \right| &\leq & \int_{-a} ^a  \int_{\mathbb {C}^n } 
\left| g(z,s) \, \overline{ e^{-i(t-s)\LL} \,h(z) } \right|dz ds \\
&\leq&  \|g \|_{L^{p',q'}}   \, \left \|    e^{-i(t-s)\LL}  \,h(z) \right \|_{L^{p,q}}\\
&\leq &  C \|g \|_{L^{p',q'}}   \, \| h(z)\|_2.
\Eea
Taking supremum over all $h$ with $\|h\|_2=1$ we get the required estimate.
\end{proof}

\section{Some auxiliary regularity estimates}\label{regularity}

In this section we establish some auxiliary estimates 
used in the existence proof. The regularity of the solution 
is obtained through certain first order Sobolev space 
$\tilde{W}^{1,p}$ naturally associated to the one parameter 
group $\{ e^{it \LL}: t \in \R\}$, which we now introduce.
Let   $L_j$ and $M_j$ be the  differential operators given 
by $$L_j=\left(\frac{\partial}{\partial
  x_j}+i\frac{y_j}{2}\right), ~~~\mbox{and}~~~ M_j=
\left(\frac{\partial}{\partial
  y_j}-i\frac{x_j}{2}\right), ~~~j=1,2,...,n.$$

\begin{Definition} Let  $m$ be a nonnegative 
integer and $1\leq p< \infty$. We say $f \in \tilde{W}^{m,p}(\C^n)$ if
 $S^\alpha f  \in L^p(\C^n)$ for $|\alpha| \leq m$ where $S^\alpha 
= \prod_{i=1}^{2n} S_i^{\alpha_i}$ with $S_i= L_i$ for $1\leq i \leq n$,
$S_i=M_i$ for $n+1 \leq i\leq 2n$, and $|\alpha| = \alpha_1+\cdots+ \alpha_{2n}$.
 $ \tilde{W}^{m,p}(\C^n)$ is a Banach space with norm given by
$$\Vert f\Vert_{\tilde{W}^{m,p}}=\max \{ \Vert S^\alpha f \Vert_{L^p}:~ 
|\alpha| \leq m \}.$$ 
In particular,  $f \in \tilde{W}^{1,p}(\C^n)$ iff $f ,~ L_jf$ and 
$M_jf  $ are in $ L^p(\C^n)$ for $1\leq j\leq n$ and the norm in
$ \tilde{W}^{1,p}(\C^n)$ is given by
$$\Vert f\Vert_{\tilde{W}^{1,p}}=\max \{\Vert f\Vert_{L^p}, 
\Vert L_jf\Vert_{L^p}, \Vert M_jf\Vert_{L^p},~ j=1,2,\ldots ,n   \}.$$
\end{Definition}

\begin{Remark}
The differential operators $L_j$ and $M_j$ are the natural ones 
adaptable to the power type nonlinearity $G(u)=|u|^\alpha u$ and the
generality that we consider here, see Lemma 
\ref{commutativity} and Proposition \ref{Gest}. The natural choice, 
namely the
standard Sobolev space  $W_\LL^{1,p}(\C^n)$ defined 
using the twisted Laplacian  $\LL$ (see \cite{T2}),
is not  suitable for treating such nonlinearities.
\end{Remark}

\begin{Remark} An interesting relation between the Sobolev space  
$\tilde{W}^{1,p}(\C^n)$ and the ordinary $L^p$ Sobolev space
$W^{1,p}(\C^n)$ is the following: If $u \in \tilde{W}^{m,p}(\C^n)$, 
then $|u| \in W^{m,p}(\C^n)$ for $m=1$. This observation is crucial
in the proof of the  following  embedding theorem for the Sobolev 
space $\tilde{W}^{1,2}$.
\end{Remark}

\begin{Lemma}\label{SobL}[Sobolev Embedding Theorem]\label{embedding} 
We have the continuous inclusion
\begin{equation*}
\begin{array}{lll}
 \tilde{W}^{1,2}\hookrightarrow L^p(\C^n) & \text{ for } 2\leq p \leq
                                                     \frac{2n}{n-1} &\text{ if
                                                     } n \geq 2,  \\
                                                     &\text{ for } 2 \leq
                                                     p < \infty &\text{ if
                                                     }  n=1.

\end{array}
\end{equation*}
\end{Lemma}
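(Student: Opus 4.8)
The strategy is to transfer the problem to the classical Sobolev embedding on $\R^{2n}$ by means of the pointwise diamagnetic inequality, exactly as indicated in the Remark preceding the statement. Identifying $\C^n$ with $\R^{2n}$ via $z=x+iy$, the operators $L_j=\partial_{x_j}+i\frac{y_j}{2}$ and $M_j=\partial_{y_j}-i\frac{x_j}{2}$ are precisely the covariant (magnetic) derivatives $\partial_{x_j}-iA_{x_j}$ and $\partial_{y_j}-iA_{y_j}$ associated with the real vector potential $A=\frac{1}{2}(-y,x)$ on $\R^{2n}$. The plan is threefold: (i) show that $u\in\tilde{W}^{1,2}(\C^n)$ implies $|u|\in W^{1,2}(\R^{2n})$ with $\norm{|u|}_{W^{1,2}}\leq C\norm{u}_{\tilde{W}^{1,2}}$; (ii) invoke the classical Sobolev embedding for $W^{1,2}(\R^{2n})$; and (iii) use $\norm{u}_{L^p}=\norm{|u|}_{L^p}$ to conclude.

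For (i), which I expect to be the crux, I would argue by regularization. For $\eps>0$ set $u_\eps=(|u|^2+\eps^2)^{1/2}$. Differentiating $u_\eps^2=u\bar u+\eps^2$ gives $\partial_{x_j}u_\eps=\mathrm{Re}(\bar u\,\partial_{x_j}u)/u_\eps$, and since $\partial_{x_j}u=L_j u-i\frac{y_j}{2}u$ and $\mathrm{Re}\bigl(-i\frac{y_j}{2}|u|^2\bigr)=0$, the purely imaginary multiplier drops out, yielding $\partial_{x_j}u_\eps=\mathrm{Re}(\bar u\,L_j u)/u_\eps$; similarly $\partial_{y_j}u_\eps=\mathrm{Re}(\bar u\,M_j u)/u_\eps$. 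Because $|u|\leq u_\eps$, this gives the uniform pointwise bounds $|\partial_{x_j}u_\eps|\leq|L_j u|$ and $|\partial_{y_j}u_\eps|\leq|M_j u|$. Letting $\eps\to0$, one has $u_\eps\to|u|$ in $L^2$ while the gradients $\nabla u_\eps$ stay bounded in $L^2$; by weak compactness and lower semicontinuity of the $L^2$ norm one identifies the weak gradient of $|u|$ and obtains $\norm{\partial_{x_j}|u|}_2\leq\norm{L_j u}_2$ and $\norm{\partial_{y_j}|u|}_2\leq\norm{M_j u}_2$. Hence $|u|\in W^{1,2}(\R^{2n})$ with $\norm{|u|}_{W^{1,2}}\leq C\norm{u}_{\tilde{W}^{1,2}}$. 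One may first carry out the computation for Schwartz $u$ and pass to the general case by density, but the regularization handles the zero set of $u$ directly.

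For (ii) and (iii), recall that the classical Sobolev embedding $W^{1,2}(\R^d)\hookrightarrow L^p(\R^d)$ holds for $2\leq p\leq\frac{2d}{d-2}$ when $d>2$ and for $2\leq p<\infty$ when $d=2$. Taking $d=2n$, the critical exponent is $\frac{2\cdot 2n}{2n-2}=\frac{2n}{n-1}$, so the admissible range is $2\leq p\leq\frac{2n}{n-1}$ for $n\geq2$ (where $2n>2$) and $2\leq p<\infty$ for $n=1$ (where $2n=2$), matching the claim. Applying this to $|u|$ and using $\norm{u}_{L^p(\C^n)}=\norm{|u|}_{L^p(\R^{2n})}$ yields $\norm{u}_{L^p}\leq C\norm{|u|}_{W^{1,2}}\leq C\norm{u}_{\tilde{W}^{1,2}}$, which is the asserted continuous inclusion. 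The only real obstacle is step (i): establishing rigorously that $|u|$ lies in $W^{1,2}$ with the stated gradient control, in particular justifying the $\eps\to0$ limit and the behaviour on $\{u=0\}$. Once this diamagnetic bound is in hand, the embedding follows at once from the Euclidean theory.
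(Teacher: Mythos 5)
Your proposal is correct, and its overall skeleton is exactly the paper's: a diamagnetic-type pointwise bound showing $u\in\tilde{W}^{1,2}$ forces $|u|\in W^{1,2}(\R^{2n})$ with $\bigl|\nabla|u|\bigr|$ controlled by $|L_ju|,|M_ju|$ (the magnetic terms dropping out under $\Re(\bar u\,\cdot\,)$), followed by the classical Euclidean embedding and $\norm{u}_{L^p}=\norm{|u|}_{L^p}$. Where you genuinely diverge is in how the crux inequality is justified. The paper computes $\partial_{x_j}|u|$ for Schwartz $u$ directly on the open set $A=\{u\neq 0\}$, dismisses the boundary $\partial A$ as a measure-zero ``set of lower dimension,'' and then extends to all of $\tilde{W}^{1,2}$ by density of $\mathcal{S}(\C^n)$. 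Your regularization $u_\eps=(|u|^2+\eps^2)^{1/2}$, with the uniform bounds $|\partial_{x_j}u_\eps|\leq|L_ju|$, $|\partial_{y_j}u_\eps|\leq|M_ju|$ and a weak-compactness passage to the limit, buys two real improvements: it handles the zero set of $u$ with no measure-theoretic claim at all (the paper's assertion about $\partial A$ is, as stated, not actually justified -- zero sets of smooth functions need not be lower-dimensional), and it works directly for arbitrary $u\in\tilde{W}^{1,2}$, since $\partial_{x_j}u=L_ju-i\tfrac{y_j}{2}u\in L^2_{loc}$ puts $u$ in $W^{1,1}_{loc}$ where the chain rule for $u_\eps$ is standard, so no density of Schwartz functions in $\tilde{W}^{1,2}$ is needed (a fact the paper uses but does not prove). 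What the paper's route buys in exchange is brevity and a purely elementary computation. One small slip to fix in yours: $u_\eps\geq\eps$ is never in $L^2(\R^{2n})$, so you should pass to the limit with $u_\eps-\eps$ (same gradient, $0\leq u_\eps-\eps\leq|u|$, hence $L^2$-convergent to $|u|$ by dominated convergence), or simply note that distributional convergence $u_\eps\to|u|$ already identifies the weak gradient; either repair is immediate.
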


\begin{proof} For $u \in {\mathcal S}(\C^n)$, we have
\Bea
2|u| \frac{\partial}{\partial x_j}|u|=\frac{\partial}{\partial x_j}
( \overline{u} u)=
2\Re \left( \overline{u} \,\frac{\partial}{\partial x_j}u\right)= 
2\Re\left(  \overline{u}( \frac{\partial}{\partial x_j}
+\frac{iy_j}{2} )\,u\right).
\Eea
Hence on the set $A=\set{z \in\C^n  \suchthat  u(z)\neq 0 },$ we have
$$ \left| \frac{\partial}{\partial x_j}|u| \right| =  
\left| \Re \left(  \frac{\overline{u}}{|u|} ( \frac{\partial}
{\partial x_j}+\frac{iy_j}{2} )\,u \right) \right|
\leq \left| L_j (u) \right| .$$ 
The  boundary $\partial A = \set{z \in\bar{A}  \suchthat ~ u(z) =0 }$ has Lebesgue measure zero, being a set of lower dimension.
Thus we see that $\left|\frac{\partial}{\partial x_j}|u|\right|\leq|L_ju|$ a.e. on the support of $u$.
Similarly  $ \left|\frac{\partial}{\partial y_j}|u|\right|\leq|M_ju|$ a.e. on the support of $u$.  
It follows that the inequality $\Vert|u|\Vert_{H^1}\leq (2n+1)\,
\Vert u \Vert_{\tilde{W}^{1,2}}$ holds
for all $ u \in {\mathcal S}(\C^n).$
By density of $ {\mathcal S}(\C^n)$ in $H^1$ and $\tilde{W}^{1,2}$, 
the same inequality holds  for all $u \in \tilde{W}^{1,2}$.  Hence the 
result follows from the usual Sobolev embedding theorem on $\R^{2n}$. 
\end{proof}

\begin{Lemma}\label{commutativity}
 The operators $L_j$ and $M_j$ commute with both the operators 
$e^{-it\LL}$ and $\displaystyle\int_{0}^t e^{-i(t-s)\mathcal{L}} ds$,  
for $j=1,2,\ldots,n.$
\end{Lemma}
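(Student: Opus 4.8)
The plan is to reduce the statement to the commutativity of $L_j$ and $M_j$ with $\LL$ itself, and then transfer this to the propagator $e^{-it\LL}$ and its time integral through the spectral decomposition. I would work throughout on the dense subspace $\mathcal{S}(\C^n)$, where every operator involved is an honest differential operator and the relevant series converge, and pass to the full space by density only at the end.

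First I would record the real form $\LL = \sum_{j=1}^n (A_j^2 + B_j^2)$ with $A_j = i\partial_{x_j} + \frac{y_j}{2}$ and $B_j = i\partial_{y_j} - \frac{x_j}{2}$, which is already displayed in the Introduction. Since $[\LL, L_j] = \sum_k \left( [A_k^2, L_j] + [B_k^2, L_j] \right)$ and $[A_k^2, L_j] = A_k[A_k,L_j] + [A_k,L_j]A_k$ (and likewise for $B_k$), it suffices to compute the first-order commutators $[A_k, L_j]$, $[B_k, L_j]$, and the analogous ones for $M_j$. For $k \neq j$ these vanish trivially, because $A_k, B_k$ involve only the variables $x_k, y_k$ while $L_j, M_j$ involve only $x_j, y_j$. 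For $k = j$ a direct computation using the canonical relations $[\partial_{x_j}, x_j] = [\partial_{y_j}, y_j] = 1$ gives $[A_j, L_j] = [B_j, L_j] = 0$ and $[A_j, M_j] = [B_j, M_j] = 0$; in each case the contribution of the derivative part cancels exactly against that of the multiplication part. This cancellation is the conceptual heart of the lemma --- the first-order terms conspire to vanish precisely because $L_j$ and $M_j$ are the generators adapted to $\LL$ --- and it explains why the bare derivatives $\partial_{x_j}, \partial_{y_j}$, which fail to commute with $\LL$, are unsuitable.

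Having established $[\LL, L_j] = [\LL, M_j] = 0$ on $\mathcal{S}(\C^n)$, I would deduce commutativity with the spectral projections $P_k$. If $\Phi$ is a special Hermite function with $\LL\Phi = (2k+n)\Phi$, then $L_j\Phi \in \mathcal{S}(\C^n)$ and $\LL(L_j\Phi) = L_j(\LL\Phi) = (2k+n)L_j\Phi$, so $L_j$ maps the eigenspace of $\LL$ for the eigenvalue $2k+n$ into itself; expanding $f = \sum_k P_k f$ and selecting the $m$-th component then yields $L_j P_m = P_m L_j$ on $\mathcal{S}(\C^n)$, and similarly for $M_j$. Applying this term by term to the series $e^{-it\LL}f = \sum_{k=0}^\infty e^{-it(2k+n)} P_k f$ that defines the propagator (following \eqref{2.4}) gives $L_j e^{-it\LL} f = e^{-it\LL} L_j f$, and the same for $M_j$, first for $f \in \mathcal{S}(\C^n)$ and then by density on $L^2(\C^n)$ and $\tilde{W}^{1,2}(\C^n)$. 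For the time integral I would interchange $L_j$ with $\int_0^t \cdots\, ds$ by differentiation under the integral sign, valid for Schwartz data, to obtain $L_j \int_0^t e^{-i(t-s)\LL} h\, ds = \int_0^t e^{-i(t-s)\LL} L_j h\, ds$.

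The commutator arithmetic itself is routine; the genuine points requiring care are the justification that $L_j$ and $M_j$ may be passed through the infinite spectral sum term by term and through the time integral, together with the density argument carrying the identities from $\mathcal{S}(\C^n)$ to the function spaces on which the operators are actually applied. Once the $k=j$ cancellation is in hand, these remaining steps are bookkeeping with the spectral decomposition, so I expect the structural commutator computation to be the essential content and the analytic justifications to be the main technical obstacle.
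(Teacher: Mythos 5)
Your proposal is correct in substance but proceeds by a genuinely different route from the paper. The paper never touches the spectral series: it writes $e^{-it\LL}f=f\times K_{it}$ as a twisted convolution and checks by a single direct computation that $L_j$ (acting in $z$) applied to the twisted-translation factor $f(z-w)e^{\frac{i}{2}\Im(z\cdot\bar{w})}$ turns into the corresponding operator acting in the $w$-variable, whence $L_j(f\times K_{it})=(L_jf)\times K_{it}$ for $f\in C_c^\infty(\C^n)$; this identity is then read off in the sense of distributions for $L^1_{loc}$ data, and the statement for $\int_0^t e^{-i(t-s)\LL}ds$ follows at once by integrating in $s$. Your route --- the commutator identities $[\LL,L_j]=[\LL,M_j]=0$ (your four brackets $[A_k,L_j]$, $[B_k,L_j]$, $[A_k,M_j]$, $[B_k,M_j]$ do all vanish, though only the mixed ones $[B_j,L_j]$, $[A_j,M_j]$ involve an actual cancellation of two $\pm\frac12$ terms; the others vanish term by term), eigenspace invariance, commutation with the projections $P_k$, and summation of the spectral series --- is sound, and it buys something the paper's proof does not: it shows $L_j,M_j$ lie in the commutant of $\LL$, so the same argument gives commutation with any spectral multiplier of $\LL$, not just the propagator, and it makes transparent why the bare derivatives $\partial_{x_j},\partial_{y_j}$ fail. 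What it costs is exactly the two justifications you flag, and one deserves a sharper warning: commuting with $\LL$ on a dense invariant domain does \emph{not} in general imply commuting with its spectral projections, so the step ``selecting the $m$-th component'' cannot be a formal term-by-term manipulation; it must run through eigenspace invariance, e.g.\ both $L_jP_mf$ and $P_mL_jf$ lie in the $(2m+n)$-eigenspace, and testing against $\Phi_{\mu\nu}$, $|\nu|=m$, using the skew-symmetry $L_j^*=-L_j$ together with $P_mL_j\Phi_{\mu\nu}=L_j\Phi_{\mu\nu}$ shows they coincide. The remaining interchanges (through the $L^2$-convergent series, the time integral, and the density limits) are legitimate because $L_j$ is continuous on distributions, so $L^2$ convergence of $u_N$ and of $L_ju_N$ forces $\lim L_ju_N=L_j\lim u_N$. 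With these points made explicit your proof closes; the paper's twisted-convolution computation avoids all of them, at the price of being special to operators represented by twisted convolution.
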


\begin{proof}
For  $f \in C_c^{\infty}(\C^n)$, we have  $e^{-it\LL}f (z) = f \times K_{it}$, 
where $K_{it}$ denotes the Schr\"{o}dinger kernel: $K_{it}(z)= 
\frac{(4\pi i)^{-n}}{(\sin t)^n} e^{\frac{i\cot t}{4}|z|^2}$.
A direct calculation shows that 
$$\left(\partial_{x_j} + i\frac{y_j}{2} \right)\left[f (z-w)\, 
e^{\frac{i}{2} \Im(z \cdot \bar{w})}
\right]=e^{\frac{i}{2} \Im(z \cdot \bar{w})}\,  \left[ \left(\partial_{w_j}+ 
\frac{i}{2} v_j \right) f \right](z-w)$$
$z=x+iy, w=u+iv$, from which the commutativity of $L_j$ and $e^{-it\LL}$ follows. 
A similar calculation also shows the commutativity of $M_j$ and   
$e^{-it\LL}$ for $j=1,2,\ldots,n$ in $C_c^\infty(\C^n)$.
From the above computation, it follows that the above 
commutativity 
holds in $L^1_{loc}(\C^n)$, treating the locally integrable function 
as a distribution.

The commutativity of $\displaystyle\int_{0}^t e^{-i(t-s)\mathcal{L}}$ with 
$L_j$ and $M_j$ is also a consequence of the above observation.
\end{proof}

For local existence, we need to establish the existence of a 
fixed point for the operator $\mathcal {H}$, defined by
\Bea
{\mathcal H}(u)(z,t)=e^{-it\mathcal{L}}f (z)-i\displaystyle
\int_{0}^t e^{-i(t-s) \, \mathcal{L}}G(z,s,u(z,s)) \, ds. \Eea
In the next two results, we prove some essential estimates 
required for this.

\begin{Lemma}\label{FEST}
 Let $ f \in \tilde{W}^{1,2}(\C^n)$. Then the following estimates hold:

\bea\label{fest1}\Vert e^{-it\mathcal{L}} f \Vert_{L^ \infty\left([-T,T\, ];
\tilde{W}^{1,2}(\C^n)\right)} &=& \Vert f \Vert_{\tilde{W}^{1,2}(\C^n)},\\
\label{fest2}
\Vert e^{-it\mathcal{L}} f \Vert_{L^q\left([-T,T];\tilde{W}^{1,p}(\C^n)\right)} 
&\leq& C\Vert f \Vert_{\tilde{W}^{1,2}(\C^n)}
\eea
for admissible pairs $(q,p)$, with a constant $C$ independent of $f$.

\end{Lemma}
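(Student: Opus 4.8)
The plan is to reduce both estimates to results already in hand, namely the commutativity Lemma~\ref{commutativity} together with the Strichartz estimates of Theorem~\ref{strichartzs}. The key structural observation is that the norm on $\tilde{W}^{1,p}$ is built from the operators $S_i \in \{L_j, M_j\}$ applied to $f$, and these operators commute with the propagator $e^{-it\LL}$. So the $\tilde{W}^{1,p}$-norm of $e^{-it\LL}f$ is controlled by applying the relevant $L^p$-in-space, $L^q$-in-time estimate to each of the functions $f, L_1f,\dots,L_nf, M_1f,\dots,M_nf$ separately.

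For \eqref{fest1}, I would argue as follows. Since $\LL$ is self-adjoint, $e^{-it\LL}$ is unitary on $L^2(\C^n)$, so $\|e^{-it\LL}f\|_{L^2}=\|f\|_{L^2}$ for every $t$. By Lemma~\ref{commutativity}, $L_j e^{-it\LL}f = e^{-it\LL}(L_j f)$ and similarly for $M_j$, hence $\|L_j e^{-it\LL}f\|_{L^2}=\|L_j f\|_{L^2}$ and $\|M_j e^{-it\LL}f\|_{L^2}=\|M_j f\|_{L^2}$ for every $t$. Taking the maximum over the finitely many operators $S_i$ gives
\[
\|e^{-it\LL}f\|_{\tilde{W}^{1,2}} = \max\{\|e^{-it\LL}f\|_{L^2}, \|L_j e^{-it\LL}f\|_{L^2}, \|M_j e^{-it\LL}f\|_{L^2}\} = \|f\|_{\tilde{W}^{1,2}}
\]
for each fixed $t$. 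Taking the supremum over $t \in [-T,T]$ on the left produces the $L^\infty$-in-time norm, and since the right-hand side is independent of $t$, the identity \eqref{fest1} follows immediately.

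For \eqref{fest2}, the idea is the same but I would invoke the genuine Strichartz inequality \eqref{mainstrest} in place of the $L^2$-isometry. For each operator $S \in \{\mathrm{Id}, L_j, M_j\}$, commutativity gives $S e^{-it\LL}f = e^{-it\LL}(Sf)$, and applying \eqref{mainstrest} to $Sf \in L^2(\C^n)$ yields $\|S e^{-it\LL}f\|_{L^{p,q}} \le C\|Sf\|_{L^2}$ for the admissible pair $(q,p)$. I would then bound the $\tilde{W}^{1,p}$-norm pointwise in time by the maximum of the $|S^\alpha(\cdot)|$ over $|\alpha|\le 1$, take the $L^q$-norm in $t$, and interchange the (finite) maximum with the $L^q$ norm at the cost of the constant $C$; summing or maximizing over the $2n+1$ choices of $S$ gives $\|e^{-it\LL}f\|_{L^q([-T,T];\tilde{W}^{1,p})} \le C\max_S\|Sf\|_{L^2} = C\|f\|_{\tilde{W}^{1,2}}$. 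The only point requiring a word of care is the exchange of the spatial $\tilde{W}^{1,p}$-maximum with the temporal $L^q$-integration: since the maximum is over a fixed finite index set, one has $\|\max_i g_i\|_{L^q} \le (\sum_i \|g_i\|_{L^q}^q)^{1/q}$, which is harmless and only affects the constant. I do not anticipate a genuine obstacle here, as the entire argument rests on the commutation identity of Lemma~\ref{commutativity}, which is precisely what allows the differential operators defining the Sobolev norm to pass through the propagator and reduce everything to scalar Strichartz estimates.
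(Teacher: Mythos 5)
Your proof is correct and follows exactly the paper's own argument: both rest on the commutativity of $L_j, M_j$ with $e^{-it\mathcal{L}}$ (Lemma \ref{commutativity}), the $L^2$-isometry of the propagator for \eqref{fest1}, and the Strichartz estimate \eqref{mainstrest} applied to each of $f, L_jf, M_jf$ for \eqref{fest2}. The paper states this in two sentences; you have merely filled in the routine details, such as the harmless interchange of the finite maximum with the $L^q$-norm in time.
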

\begin{proof}
Since  both $ L_j$ and $M_j$ commute with the isometry $e^{-it\mathcal{L}}$, 
we have $$\|Se^{-it\mathcal{L}} f \|_{L^2(\C^n)}= \| Sf\|_{L^2(\C^n)}$$ 
for every $t\in \R$ with $S=L_j, ~\mbox{or}~M_j, ~ j = 1,2,...,n$ 
from which Eq. (\ref{fest1}) follows.
Estimate (\ref{fest2}) follows from the Strichartz type estimate 
(\ref{mainstrest}) for $e^{-it\LL}$ using the above commutativity.
\end{proof}

\begin{Proposition}\label{Gest}
Let  $G(z,t,w)$ and $\alpha$ be as in (\ref{nlc1}), (\ref{nlc2}) and $(q,p)$ an admissible pair
with  $p=\alpha +2$, $q> 2$ and $S$ as in Lemma \ref{FEST}. If  
$u \in L^\infty([-T,T];\tilde{W}^{1,2}(\C^n)) \cap {L^q([-T,T];
\tilde{W}^{1,p}(\C^n))}$, then 
$$G(z,t,|u(z,t)|) \in L^{q'}\left([-T,T];\tilde{W}^{1, p'}(\C^n)\right)$$ 
and the following inequalities hold:
\bea \label{Gest1}
&& \left\Vert G(z,t,|u(z,t)|) \right\Vert_{L^{p',q'}}  \\
&\leq& CT^{\frac{q-q\prime}{qq\prime}} 
\|u(z,t)\|_{L^\infty ([-T,T],{\tilde{W}^{1,2}(\C^n))} }^\alpha
\Vert u(z,t)\Vert_{L^q([-T,T]:L^p(\C^n) )}, \nonumber\\
 \label{Gest2}
&& \left\Vert S G(z,t,|u(z,t)|) \right\Vert_{L^{p',q'}} \\ 
\hspace{.5cm}&\leq& CT^{\frac{q-q\prime}{qq\prime}} 
\|u(z,t)\|_{L^\infty ([-T,T],{\tilde{W}^{1,2}(\C^n))} }^\alpha
\Vert  u(z,t)\Vert_{L^q([-T,T]:\tilde{W}^{1,p}(\C^n) )}.  \nonumber
\eea
\end{Proposition}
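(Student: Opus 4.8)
The plan is to reduce both inequalities to \emph{pointwise} bounds on $G$ and on its first order derivatives $L_jG$ and $M_jG$, and then to run a two step application of Hölder's inequality: first I peel off $\alpha$ powers and convert them, via the Sobolev embedding of Lemma \ref{embedding}, into the $L^\infty$-in-time factor $\|u\|_{L^\infty([-T,T];\tilde{W}^{1,2})}^\alpha$; then I apply Hölder in the time variable on the finite interval $[-T,T]$ to pass from an $L^q$ time norm to the required $L^{q'}$ time norm, which is exactly what produces the gain $T^{(q-q')/(qq')}$. The finiteness of the resulting right-hand sides simultaneously establishes the asserted membership $G(z,t,|u|)\in L^{q'}([-T,T];\tilde{W}^{1,p'}(\C^n))$.

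For (\ref{Gest1}) I would first note that (\ref{nlc1}) together with the bound $|\psi|\le C|w|^\alpha$ from (\ref{nlc2}) gives $|G(z,t,u)|\le C|u|^{\alpha+1}$ pointwise. Because $p=\alpha+2$ forces $(\alpha+1)p'=p$, this yields $\|G(\cdot,t)\|_{L^{p'}}\le C\|u(\cdot,t)\|_{L^p}^{\alpha+1}$. Writing $\|u(\cdot,t)\|_{L^p}^{\alpha+1}=\|u(\cdot,t)\|_{L^p}^{\alpha}\,\|u(\cdot,t)\|_{L^p}$ and estimating the first factor by $C\|u(\cdot,t)\|_{\tilde{W}^{1,2}}^{\alpha}$ through Lemma \ref{embedding} (applicable since $\alpha<\frac{2}{n-1}$ gives $p=\alpha+2<\frac{2n}{n-1}$), I arrive at
\[
\|G(\cdot,t)\|_{L^{p'}}\le C\,\|u\|_{L^\infty([-T,T];\tilde{W}^{1,2})}^{\alpha}\,\|u(\cdot,t)\|_{L^p}.
\]
Taking the $L^{q'}$ norm in $t$ and applying Hölder on $[-T,T]$ (legitimate because $q'<2<q$) gives (\ref{Gest1}).

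The estimate (\ref{Gest2}) requires differentiating $G(z,t,|u|)=\psi(x,y,t,|u|)\,u$. For $S=L_j$ (the case $S=M_j$ being identical) the product rule gives $L_jG=(\partial_{x_j}\psi)\,u+\psi\,L_ju$, and the chain rule gives $\partial_{x_j}\psi=(\partial_{x_j}\psi)(x,y,t,|u|)+(\partial_4\psi)(x,y,t,|u|)\,\partial_{x_j}|u|$. The explicit term is bounded by $C|u|^\alpha$ from (\ref{nlc2}); the decisive observation is that the factor $u$ present in $G$ furnishes exactly the weight required by the hypothesis on $|w|\,\partial_4\psi$, so that $|u\,\partial_4\psi|\le C|u|^\alpha$, while $|\partial_{x_j}|u||\le|L_ju|$ holds a.e.\ by the inequality established inside the proof of Lemma \ref{embedding}. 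Collecting the terms gives the pointwise bound
\[
|L_jG|\le C\bigl(|u|^{\alpha+1}+|u|^{\alpha}\,|L_ju|\bigr).
\]
The first summand is handled exactly as in (\ref{Gest1}); for the second, Hölder with the conjugate exponents $p/\alpha$ and $p$ yields $\bigl\||u|^{\alpha}|L_ju|\bigr\|_{L^{p'}}\le\|u(\cdot,t)\|_{L^p}^{\alpha}\,\|L_ju(\cdot,t)\|_{L^p}$, and the same Sobolev and time-Hölder steps then produce (\ref{Gest2}).

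The step I expect to be the main obstacle is the rigorous justification of the product and chain rules for $S\bigl(\psi(x,y,t,|u|)\,u\bigr)$ at this low regularity, together with control of $\partial_4\psi$ near the zero set of $u$, where $|u|^{\alpha-1}$ is singular for $\alpha<1$. This is precisely the reason (\ref{nlc2}) is imposed on $|w|\,\partial_4\psi$ rather than on $\partial_4\psi$ alone: the extra factor $|u|$ coming from $G=\psi u$ cancels the singularity. Legitimizing the differentiation requires approximating $u$ by Schwartz functions and passing to the limit, using that $|\partial_{x_j}|u||\le|L_ju|$ a.e.\ on the support of $u$; this approximation, rather than the bookkeeping of exponents, is the delicate part.
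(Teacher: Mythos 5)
Your proof is correct and follows essentially the same route as the paper's: the same product/chain-rule decomposition of $S(\psi u)$ (the paper's identities (\ref{L of upsi}), (\ref{M of upsi})), the same use of the hypothesis on $|w|\,\partial_4\psi$ together with $\left|\partial_{x_j}|u|\right|\leq |L_ju|$ a.e., and the same H\"{o}lder-in-$z$ / Sobolev embedding / H\"{o}lder-in-$t$ chain, your pointwise bound $|G|\leq C|u|^{\alpha+1}$ with $(\alpha+1)p'=p$ being just a repackaging of the paper's splitting $\|\psi u\|_{L^{p'}}\leq \|\psi\|_{L^{p/\alpha}}\|u\|_{L^p}$. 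The differentiation step you flag as delicate is treated at the same level of rigor in the paper, which simply asserts the identities and proceeds.
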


\begin{proof}
Since $G(z,t,u(z,t)) =\psi(z,t,|u(z,t)|)u(z,t)$, and 
$\frac{q'}{q}+\frac{q-q'}{q}=1$, an application of 
the H\"{o}lder's inequality in the $t$-variable shows that for $q > 2$
\bea\label{est1psi}
\left\Vert G(z,t,|u(z,t)|)\right\Vert_{L^{q'}\left([-T,T];L^{p'}(\C^n)\right)}
&\\ \nonumber
&\hspace{-5cm}\leq
(2T)^{\frac{q-q\prime}{qq\prime}}\Vert\psi(z,t,|u(z,t)|)u(z,t)
\Vert_{L^q\left([-T,T];L^{p\prime}(\C^n)\right)}.
\eea  
By an application of H\"{o}lder's inequality 
in the $z$-variable, using $\frac{p'}{p}+\frac{\alpha p'}{p}=1$,
we see that for a.e. $t\in[-T,T]$
\bea
\| \psi(z,t,|u(z,t)|) \, u(z,t) \|_{L^{p\prime}(\C^n)}
&\leq\| \psi(z,t,|u(z,t)|) \|_{L^{\frac{p}{\alpha}}(\C^n)}\, 
\|u(z,t) \|_{L^p(\C^n)} \nonumber \\
&\hspace{-.6cm} \leq C\| u(z,t)\|^\alpha _{\tilde{W}^{1,2}(\C^n)} 
\| u(z,t)\|_{L^p(\C^n)} \nonumber \\\label{est2psi}
&\hspace{-.2cm}\leq C\|u\|_{L^\infty \left([-T,T],
{\tilde{W}^{1,2}}\right) }^\alpha\Vert u(z,t)\Vert_{L^p(\C^n)},
\eea

\noindent where we used the condition (\ref{nlc2}) on $\psi$ and Lemma 
\ref{embedding} in the second inequality. Now taking 
$L^{q}$ norm with respect to $t$ on both sides, and substituting in
the RHS of inequality (\ref{est1psi})  gives estimate (\ref{Gest1}).

To prove the inequality (\ref{Gest2}), we first observe that
\bea\label{L of upsi} L_j[\psi(x,y,t,|u|) u] &=& \psi(x,y,t,|u|)\, 
L_ju\nonumber  \\
 \!\!\! &+& u \, (\partial_4 \psi)(x,y,t,|u|) \,
\Re\left( \frac{\bar{u}}{|u|}L_ju \right)+u(\partial_{x_j} \psi)(x,y,t,|u|),\\
\label{M of upsi}M_j[\psi(x,y,t,|u|) u] &=&  \psi(x,y,t,|u|)\,
 M_ju  \nonumber \\
\!\!\! &+&\!\!\! u \, (\partial_4 \psi)(x,y,t,|u|) \,
\Re\left( \frac{\bar{u}}{|u|}M_ju \right)+u(\partial_{y_j} \psi)(x,y,t,|u|).
\eea

Thus we see that for $S=L_j$ and $M_j$, $|SG|$ satisfies an 
inequality of the form
$$|SG| \leq |\psi(x,y,t,|u|) \, Su| + |\tilde{\psi_1}(x,y,t,|u|) \,S u| 
+  |\tilde{\psi_2}(x,y,t,|u|) \, u|$$
where $\tilde{\psi_1}(x,y,t,|u|) = u \partial_4 \psi $ and 
$\tilde{\psi_2}(x,y,t,|u|)= u \partial_{x_j} \psi$ or $u \partial_{y_j} \psi$
depending on $S=L_j$ or $M_j$. Moreover, by assumption (\ref{nlc2}) on $\psi$, 
we have $|\tilde{\psi_i}(x,y,t,|u|)| \leq C|u|^\alpha, ~ i=1,2.$
Hence, using arguments as in inequalities (\ref{est1psi}) and  (\ref{est2psi}), we get
\Bea && \left\Vert S G(z,t,|u(z,t)|) 
\right\Vert_{L^{q'}\left([-T,T];L^{p'}(\C^n)\right)} \\ \nonumber
&\leq& CT^{\frac{q-q\prime}{qq\prime}} 
\|u(z,t)\|_{L^\infty ([-T,T],{\tilde{W}^{1,2}(\C^n))} }^\alpha \times \nonumber \\
&& ( 2 \Vert S u(z,t)\Vert_{L^q[-T,T]:L^p(\C^n)} 
+ \Vert u(z,t)\Vert_{L^q[-T,T]:L^p(\C^n)} ) \nonumber\\
&\leq& CT^{\frac{q-q\prime}{qq\prime}} 
\|u(z,t)\|_{L^\infty ([-T,T],{\tilde{W}^{1,2}(\C^n))} }^\alpha
\Vert  u(z,t)\Vert_{L^q([-T,T]:\tilde{W}^{1,p}(\C^n))},  \nonumber
\Eea
which is the inequality (\ref{Gest2}).
\end{proof}

\begin{Proposition}\label{GEST}
 Let $G(z,t,w) $ be as in (\ref{nlc1}) and $\alpha$ as in assumption (\ref{nlc2}),  
$u \in L^\infty([-T,T];\tilde{W}^{1,2}(\C^n)) \cap 
{L^q([-T,T];\tilde{W}^{1,p}(\C^n))}$
where $(q,p)$ is an admissible pair with $p=\alpha +2$ and $q> 2$. Then for $|t| \leq T$, we have
\begin{equation}
\begin{aligned}\label{GEST1}
 \left\Vert\int_{0}^t  e^{-i(t-s)\mathcal{L}}
 G(z,s,u(z,s))ds\right\Vert_{L^q\left([-T,T] ;L^p(\C^n)\right)} \\
 &\hspace{-7cm} \leq CT^{\frac{q-q\prime}{qq\prime}}\|u(z,t)\|_
{L^\infty ([-T,T]; \tilde{W}^{1,2})} ^\alpha 
\| u(z,t)\|_{L^q ([-T,T],L^p(\C^n))},\\
\end{aligned}
\end{equation}

\begin{equation}
\begin{aligned}\label{GEST2}
 \left\Vert   \int_{0}^t   e^{-i(t-s)\mathcal{L}}\, G(z,s,u(z,s)) ds   
\right  \Vert_{L^q([-T,T];\tilde{W}^{1,p}(\C^n) )}  \\
 &\hspace{-7cm} \leq CT^{\frac{q-q\prime}{qq\prime}}\|u(z,t)\|_
{L^\infty ([-T,T]; \tilde{W}^{1,2})} ^\alpha \| u(z,t)\|_
{L^q ([-T,T],\tilde{W}^{1,p}(\C^n))}.
 \end{aligned}
\end{equation}
\end{Proposition}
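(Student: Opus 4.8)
The plan is to obtain both estimates by concatenating the inhomogeneous Strichartz estimate (\ref{stauxest}) of Theorem \ref{strichartzs} with the nonlinear bounds of Proposition \ref{Gest}, the passage to $\tilde{W}^{1,p}$ being handled through the commutation of $S = L_j, M_j$ with the Duhamel operator provided by Lemma \ref{commutativity}.

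For (\ref{GEST1}) I would argue as follows. Since $u \in L^\infty([-T,T];\tilde{W}^{1,2}) \cap L^q([-T,T];\tilde{W}^{1,p})$, Proposition \ref{Gest} guarantees that $g(z,s) := G(z,s,u(z,s))$ lies in $L^{q'}([-T,T];\tilde{W}^{1,p'})$, and in particular $g \in L^{p',q'}$. Applying the retarded estimate (\ref{stauxest}) directly to $g$ gives $\| \int_0^t e^{-i(t-s)\LL} g \, ds \|_{L^{p,q}} \leq C \|g\|_{L^{p',q'}}$, and inserting the bound (\ref{Gest1}) for $\|g\|_{L^{p',q'}}$ reproduces exactly the right-hand side of (\ref{GEST1}).

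For (\ref{GEST2}) the $\tilde{W}^{1,p}$ norm requires controlling $S$ applied to the Duhamel term for each $S \in \{L_1,\dots,L_n,M_1,\dots,M_n\}$, in addition to the plain $L^p$ norm already covered by (\ref{GEST1}). By Lemma \ref{commutativity}, $S$ commutes with $\int_0^t e^{-i(t-s)\LL}(\cdot)\,ds$, so that $S\int_0^t e^{-i(t-s)\LL} g\, ds = \int_0^t e^{-i(t-s)\LL}(Sg)\,ds$, where $Sg \in L^{p',q'}$ again by Proposition \ref{Gest}. Applying (\ref{stauxest}) to $Sg$ and then the bound (\ref{Gest2}) for $\|Sg\|_{L^{p',q'}}$ controls each $S$-term by the right-hand side of (\ref{GEST2}). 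Taking the maximum over the $L^p$ term and the finitely many $S$-terms, and noting $\|u\|_{L^q;L^p} \leq \|u\|_{L^q;\tilde{W}^{1,p}}$ so that the right-hand side of (\ref{GEST1}) is dominated by that of (\ref{GEST2}), then yields (\ref{GEST2}).

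The one point demanding care — and the main obstacle — is the rigorous justification of pulling $S$ inside the time integral and through the propagator. Lemma \ref{commutativity} establishes this identity in the distributional sense for $C_c^\infty$ data, so I would extend it to $g = G(z,s,u(z,s))$ by a density argument: approximate $u(\cdot,s)$ by Schwartz functions in $\tilde{W}^{1,2}\cap\tilde{W}^{1,p}$ for almost every $s$, and pass to the limit using the continuity of both sides in the relevant mixed norms, which is exactly what the estimate (\ref{stauxest}) together with the bounds of Proposition \ref{Gest} provide. Once this commutation is secured, the proof is a direct chain of the cited inequalities, with the factor $T^{(q-q')/(qq')}$ originating entirely from the H\"older-in-time step already carried out inside Proposition \ref{Gest}.
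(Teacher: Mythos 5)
Your proposal is correct and follows essentially the same route as the paper's own proof: Proposition \ref{Gest} to place $G$ and $SG$ in $L^{q'}([-T,T];L^{p'})$, the inhomogeneous Strichartz estimate (\ref{stauxest}) applied to $G$ and (via Lemma \ref{commutativity}) to $SG$, and then the bounds (\ref{Gest1}), (\ref{Gest2}) chained in, with the $S$-terms and the $L^p$ term combined to give the $\tilde{W}^{1,p}$ bound. The commutation step you flag as the main obstacle is already dispatched in the paper by Lemma \ref{commutativity} itself, which extends the identity distributionally to $L^1_{loc}$ data, so no separate density argument is carried out there.
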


\begin{proof} It follows from Proposition \ref{Gest}, that 
$G(z,t,|u(z,t)|) $ and $SG(z,t,|u(z,t)|)$  are in 
$L^{q'}\left([-T,T];L^{p'}(\C^n)\right)$
for admissible pairs $(q,p) $ with $p= \alpha +2$, $q>2$.
Since $(q,p)$ is admissible, by estimates (\ref{stauxest}) 
and (\ref{Gest1}), we get
\bea \label{pqineq1} 
&&  \left\Vert\int_{0}^t  e^{-i(t-s)\mathcal{L}} G(z,s,u(z,s))ds  
\right\Vert  _{L^{p,q}}  \nonumber \\
&& \hspace{1cm} \leq C\left\Vert G(z,s,|u(z,s)|) \right\Vert_{L^{p',q'}} \nonumber \\
  && \hspace{1cm} \leq CT^{\frac{q-q\prime}{qq\prime}}\|u(z,t)\|_{L^\infty 
([-T,T]; \tilde{W}^{1,2})} ^\alpha \| u(z,t)\|_{L^{p,q}}
\eea
which is inequality (\ref{GEST1}).

Again by commutativity of $L_j$ and $M_j$ with $\int_0^ t 
e^{-i(t-s)\LL} ds$ and the fact that
$SG(z,s,|u(z,s)|) \in L^{q'}\left([-T,T];L^{p'}(\C^n)\right)$, for $S=L_j, M_j, j=1,2,\ldots,n$,
we get as above
\bea \label{pqineq2}
&&  \left\Vert S \int_{0}^t  e^{-i(t-s)\mathcal{L}} G(z,s,u(z,s))ds  
\right\Vert  _{L^{p,q}} \nonumber\\
&& \hspace{1cm} \leq C\left\Vert SG(z,s,|u(z,s)|) 
\right\Vert_{L^{p', q'}} \nonumber\\
  && \hspace{1cm} \leq CT^{\frac{q-q\prime}{qq\prime}}\|u(z,t)\|_
{L^\infty ([-T,T]; \tilde{W}^{1,2})} ^\alpha 
\| u(z,t)\|_{L^q ([-T,T],\tilde{W}^{1,p}(\C^n))}
\eea and the inequality (\ref{GEST2}) follows from the above two estimates.
\end{proof}

\begin{Proposition}\label{GESTfor2}
Let $G(z,t,w) ,\alpha , u $ be as in Proposition \ref{GEST} and $(q,p)$ an admissible pair with $p=\alpha +2$ and $q> 2$. 
Then for $|t| \leq T$, we have
\begin{equation}
\begin{aligned}\label{GESTfor2.1}
 \left\Vert\int_{0}^t  e^{-i(t-s)\mathcal{L}}
 G(z,s,u(z,s))ds\right\Vert_{L^\infty \left([-T,T] ;L^2(\C^n)\right)} \\
 &\hspace{-7cm} \leq CT^{\frac{q-q\prime}{qq\prime}}\|u(z,t)\|_
{L^\infty ([-T,T]; \tilde{W}^{1,2})} ^\alpha 
\| u(z,t)\|_{L^q ([-T,T],L^p(\C^n))}
\end{aligned}
\end{equation}

\begin{equation}
\begin{aligned}\label{GESTfor2.2}
 \left\Vert   \int_{0}^t   e^{-i(t-s)\mathcal{L}}\, G(z,s,u(z,s)) ds   
\right  \Vert_{L^\infty([-T,T];\tilde{W}^{1,2}(\C^n) )}  \\
 &\hspace{-7cm} \leq CT^{\frac{q-q\prime}{qq\prime}}\|u(z,t)\|_
{L^\infty ([-T,T]; \tilde{W}^{1,2})} ^\alpha \| u(z,t)\|_
{L^q ([-T,T],\tilde{W}^{1,p}(\C^n))}.
 \end{aligned}
\end{equation}

\begin{proof}
The proof follows exactly as in Proposition \ref{GEST}, by using (\ref{retstrest}) instead of (\ref{stauxest}) in inequalities (\ref{pqineq1}) and (\ref{pqineq2}). 
\end{proof}

\end{Proposition}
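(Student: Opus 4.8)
The plan is straightforward: Proposition \ref{GESTfor2} is the exact analogue of Proposition \ref{GEST}, with the target space $L^{2,\infty} = L^\infty([-T,T]; L^2(\C^n))$ replacing the Strichartz space $L^{p,q}$, so I would mirror the proof of Proposition \ref{GEST} line by line, substituting the retarded endpoint estimate (\ref{retstrest}) for the estimate (\ref{stauxest}). The key inputs are already in place: from Proposition \ref{Gest} we know that, for $u \in L^\infty([-T,T];\tilde{W}^{1,2}) \cap L^q([-T,T];\tilde{W}^{1,p})$ with $(q,p)$ admissible and $p = \alpha+2$, both $G(z,t,|u(z,t)|)$ and $SG(z,t,|u(z,t)|)$ (for $S = L_j$ or $M_j$) lie in $L^{q',p'}$, together with the quantitative bounds (\ref{Gest1}) and (\ref{Gest2}).

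First I would establish (\ref{GESTfor2.1}). Applying the retarded Strichartz estimate (\ref{retstrest}) to $g(z,s) = G(z,s,u(z,s))$ gives
\[
\left\Vert \int_0^t e^{-i(t-s)\LL} G(z,s,u(z,s))\, ds \right\Vert_{L^{2,\infty}}
\leq C \left\Vert G(z,s,|u(z,s)|) \right\Vert_{L^{p',q'}},
\]
and then inserting the bound (\ref{Gest1}) from Proposition \ref{Gest} yields precisely the right-hand side of (\ref{GESTfor2.1}). This is exactly the chain (\ref{pqineq1}) with $L^{2,\infty}$ in place of $L^{p,q}$.

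Next I would prove (\ref{GESTfor2.2}). By Lemma \ref{commutativity}, each $L_j$ and $M_j$ commutes with $\int_0^t e^{-i(t-s)\LL}\, ds$, so for $S = L_j$ or $M_j$ I may pull $S$ through the integral operator and again apply (\ref{retstrest}), this time to $SG$, obtaining
\[
\left\Vert S \int_0^t e^{-i(t-s)\LL} G(z,s,u(z,s))\, ds \right\Vert_{L^{2,\infty}}
\leq C \left\Vert SG(z,s,|u(z,s)|) \right\Vert_{L^{p',q'}},
\]
after which the bound (\ref{Gest2}) controls the right-hand side by the desired quantity. Combining the estimate for the function itself with the estimates for all $2n$ derivatives $L_j$, $M_j$, and taking the maximum that defines the $\tilde{W}^{1,2}$ norm, gives (\ref{GESTfor2.2}). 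Since the paper already records this as ``follows exactly as in Proposition \ref{GEST},'' the only substantive point is that the retarded estimate (\ref{retstrest}) is available with the same admissible-pair hypothesis as (\ref{stauxest}); there is no genuine obstacle, and the mild time-factor $T^{(q-q')/(qq')}$ is produced by the very same Hölder step in the $t$-variable already carried out in (\ref{est1psi}). The one bookkeeping subtlety worth checking is that the commutativity of $S$ with the Duhamel integral is needed in the $L^2$-valued setting, but Lemma \ref{commutativity} is proved at the level of distributions and hence applies verbatim here.
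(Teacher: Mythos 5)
Your proposal is correct and is essentially identical to the paper's own proof: the paper also obtains Proposition \ref{GESTfor2} by repeating the argument of Proposition \ref{GEST} verbatim, substituting the retarded estimate (\ref{retstrest}) for (\ref{stauxest}) in the two chains (\ref{pqineq1}) and (\ref{pqineq2}), with Proposition \ref{Gest} and Lemma \ref{commutativity} supplying the bounds on $G$, $SG$ and the commutation of $S=L_j,M_j$ with the Duhamel integral. No gap to report.
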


\section{Local existence}\label{local existence}

In this section we prove local existence of solutions 
in the  first order Sobolev space $\tilde{W}^{1,2}$.
We follow Kato's approach using Strichartz estimates.  The key step is to  identify a subset in 
$L^\infty([-T,T]; \tilde{W}^{1,2}(\mathbb{C}^n))$, for a suitable $T$, where the operator
${\mathcal H}$ is a contraction. We proceed as follows:

For given positive numbers  
$T $ and $M$, consider the set $E=E_{T,M}$ given by

\begin{equation*}
  E=\left\{u \in  L^\infty\left([-T,T];\tilde{W}^{1,2}\right) 
\cap L^q\left([-T,T],\tilde{W}^{1,p}\right) \left\vert
\begin{array}{l} \Vert u\Vert_{L^{\infty}\left([-T,T],\tilde{W}^{1,2}\right)} \leq M,\\
 \Vert u\Vert_{L^q\left([-T,T],\tilde{W}^{1,p}\right)}\leq
  M\end{array}\right.\right\}
\end{equation*}
Introduce a metric on $E$, by setting
$$d(u,v)= \Vert u-v\Vert_{L^ \infty\left([-T,T],L^2 \right)}
+\Vert u-v\Vert_{L^q\left([-T,T],L^p \right)}. $$
\begin{Proposition}
$(E,d)$ is a complete metric space.
\end{Proposition}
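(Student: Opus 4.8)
The plan is to exhibit $(E,d)$ as a $d$-closed subset of a complete metric space, so that completeness follows automatically. First I would introduce the ambient space
$$X = L^\infty\left([-T,T]; L^2(\C^n)\right) \cap L^q\left([-T,T]; L^p(\C^n)\right),$$
equipped with the norm $\|u\|_X = \|u\|_{L^\infty([-T,T], L^2)} + \|u\|_{L^q([-T,T], L^p)}$, so that $d(u,v) = \|u-v\|_X$. Each of the two summands is a standard mixed-norm space which is complete, and the intersection endowed with the sum of the two norms is again a Banach space; hence $(X,d)$ is a complete metric space. Since $E \subset X$ and $d$ is the restriction to $E$ of the metric induced by $\|\cdot\|_X$, it suffices to show that $E$ is closed in $(X,d)$.

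To that end, let $\{u_m\}$ be a $d$-Cauchy sequence in $E$. By completeness of $X$ there is $u \in X$ with $u_m \to u$ in $X$; in particular $u_m \to u$ in $L^\infty([-T,T], L^2)$ and in $L^q([-T,T], L^p)$. It remains to verify that $u$ satisfies the two defining bounds of $E$, namely $\|u\|_{L^\infty([-T,T], \tilde{W}^{1,2})} \le M$ and $\|u\|_{L^q([-T,T], \tilde{W}^{1,p})} \le M$. This is the only nontrivial point, since $d$ controls only the $L^2$ and $L^p$ norms, whereas membership in $E$ requires control of the first-order quantities $L_j u$ and $M_j u$ as well.

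The key step is a weak-compactness and lower-semicontinuity argument. Recall that $\tilde{W}^{1,2}$ is a separable Hilbert space and $\tilde{W}^{1,p}$ is reflexive for $1<p<\infty$ (both embed as closed subspaces of a finite product of $L^2$, respectively $L^p$, via $u \mapsto (u, L_1 u, \dots, L_n u, M_1 u, \dots, M_n u)$); moreover for any admissible pair one has $2<q<\infty$ and $2 \le p = \alpha + 2 < \infty$. Consequently $L^q([-T,T], \tilde{W}^{1,p})$ is reflexive and $L^\infty([-T,T], \tilde{W}^{1,2})$ is a dual Banach space with separable predual. Since $\{u_m\}$ is bounded by $M$ in each of these spaces, I would extract a subsequence converging weakly in $L^q([-T,T], \tilde{W}^{1,p})$ and weak-$*$ in $L^\infty([-T,T], \tilde{W}^{1,2})$. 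Because the operators $L_j, M_j$ are continuous on $\mathcal{D}'(\C^n)$ and $u_m \to u$ already converges in $L^2([-T,T]\times\C^n)$, hence in $\mathcal{D}'$, the weak and weak-$*$ limits are necessarily $u$ together with its derivatives $L_j u, M_j u$; in particular these derivatives lie in the respective spaces. Lower semicontinuity of the norm under weak and weak-$*$ convergence then gives
$$\|u\|_{L^\infty([-T,T], \tilde{W}^{1,2})} \le \liminf_m \|u_m\|_{L^\infty([-T,T], \tilde{W}^{1,2})} \le M,$$
and likewise $\|u\|_{L^q([-T,T], \tilde{W}^{1,p})} \le M$. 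Hence $u \in E$, so $E$ is $d$-closed in $X$ and $(E,d)$ is complete.

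The main obstacle is precisely the mismatch between the strong norms defining $E$ and the weak metric $d$: one must argue that the $d$-limit retains the full $\tilde{W}^{1,2}$ and $\tilde{W}^{1,p}$ bounds even though $d$ sees no derivatives. The clean way around this is the weak-compactness argument above, together with the observation (already used in Lemma \ref{commutativity} and Lemma \ref{embedding}) that $L_j, M_j$ act continuously at the level of distributions, so that distributional and weak limits of the derivatives coincide and the uniform bound $M$ passes to the limit by lower semicontinuity.
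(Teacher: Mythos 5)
Your proposal is correct, but it takes a genuinely different route from the paper's. The paper never invokes weak compactness: it takes a Cauchy sequence in $(E,d)$, produces the candidate limit $u$ by subsequence extraction and almost everywhere convergence, and then recovers the derivative bounds by a hands-on duality argument --- pairing $Su$ (defined distributionally, $S=L_j,M_j$) against test functions $\varphi\in C_c^\infty$, writing $\langle Su,\varphi\rangle=\langle u,S^*\varphi\rangle$, inserting $u_m$, using H\"{o}lder together with the strong convergence $u_m\to u$ in $L^{p,q}$ and the uniform bound $\Vert Su_m\Vert_{L^{p,q}}\leq M$, and finally taking the supremum over $\Vert\varphi\Vert_{L^{p',q'}}\leq 1$ to conclude $\Vert Su\Vert_{L^{p,q}}\leq M$ (similarly for the pair $(\infty,2)$). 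In effect the paper proves weak-$*$ lower semicontinuity ``by hand'' through the duality description of the mixed norms, so it needs neither reflexivity of $\tilde{W}^{1,p}$, nor the identification of $L^\infty([-T,T],\tilde{W}^{1,2})$ as a dual space, nor any compactness extraction in Bochner spaces. Your argument replaces this with the abstract machinery: ambient Banach space $X$, closedness of $E$, weak/weak-$*$ sequential compactness of bounded sets, identification of limits via distributional continuity of $L_j,M_j$, and norm lower semicontinuity. Your framing has a real advantage: the existence of the $d$-limit and the convergence $d(u_m,u)\to 0$ come for free from completeness of $X$, a point the paper leaves implicit (it verifies the bounds placing $u$ in $E$ but does not explicitly check $d(u_m,u)\to 0$, which needs a small Fatou-type remark). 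The paper's approach, in turn, is more elementary and self-contained. One caution for your write-up: identifying the weak-$*$ limit in $L^\infty([-T,T],\tilde{W}^{1,2})$ directly is awkward, because the pairing with the predual goes through the $\tilde{W}^{1,2}$ inner product and thus carries derivatives onto the test element; the clean implementation is the one your parenthetical already suggests --- work componentwise with $(u_m,L_ju_m,M_ju_m)$ in the product $(L^\infty([-T,T],L^2))^{2n+1}$, extract weak-$*$ limits against the plain $L^2$ pairing, and identify each component distributionally. At that point your argument and the paper's duality computation essentially coincide.
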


\begin{proof}
Let $\{u_m\}$ be a Cauchy sequence in $(E,d)$. By passing to a 
subsequence if necessary,  we see that $\{u_m(\cdot,t)\}$ is 
Cauchy in $L^2(\C^n, dz)$ as well as in $L^q(\C^n,dz)$ for 
almost all $t$. Going for a further subsequence and appealing 
to an almost everywhere convergence argument in $z$-variable, 
we conclude that they have the same almost everywhere limit, 
say $u(\cdot,t)\in L^q \cap L^2 (\C^n)$ for almost all $t$. 
We need to show that 
$ u \in L^\infty(I;\tilde{W}^{1,2}(\C^n)) \cap L^q(I; \tilde{W}^{1,p}(\C^n))$ 
with
$$ \max \{ \Vert u\Vert_{L^{2,\infty}}, \Vert L_ju\Vert_{L^{2,\infty}}, 
\Vert M_ju\Vert_{L^{2,\infty}}, j=1,2,...,n\} \leq M $$ and 
$$ \max \{ \Vert u\Vert_{L^{p,q}}, \Vert L_ju\Vert_{L^{p,q}},  
\Vert M_ju\Vert_{L^{p,q}},  j=1,2,...,n\}  \leq M .$$
Let $S=L_j$ or $M_j$ be as before  and $\varphi \in 
C_c^\infty([-T,T] \times \C^n)$. Then for fixed $t\in [-T,T]$, using the pairing $\langle, \rangle_z$ in the $z$-variable,
we see that 
\Bea
 | \langle u(\cdot,t), S^*\varphi (\cdot,t) \rangle_z | &\leq 
& | \langle (u-u_m)(\cdot,t), S^* \varphi (\cdot,t) \rangle |  
+| \langle Su_m(\cdot,t), \varphi (\cdot,t) \rangle|\\
&\leq & \| u(\cdot,t)-u_m(\cdot,t)\|_{L^p (\C^n,dz)} \| 
S^* \varphi (\cdot,t)\|_{L^{p'} (\C^n,dz)}  \\
&&\hspace{.3cm}+ ~ \| Su_m(\cdot,t)\|_{L^p (\C^n,dz)} 
\| \varphi (\cdot,t)\|_{L^{p'} (\C^n,dz)}.
\Eea
Integrating with respect to $t$, and applying the H\"{o}lder's inequality in the $t$-variable, this yields
$$|\langle Su, \varphi  \rangle_{z,t}| \leq \| u-u_m\|_{L^{p,q}}  
\| S^* \varphi \|_{L^{p',q'}}+  \| Su_m\|_{L^{p,q}} \| \varphi \|_{L^{p',q'}}.$$
 Since $u_m \in E,~ \| Su_m \| _{L^{p,q}} \leq M$, thus
 letting  $m \rightarrow \infty $ we get
 \begin{equation*}
|\langle Su, \varphi  \rangle_{z,t}| 
\leq \underset{m \to \infty}{\limsup}\| Su_m\|_{L^{p,q}} 
\| \varphi \|_{L^{p',q'}} \leq M \| \varphi \|_{L^{p',q'}}.
\end{equation*}
Taking supremum over all $\varphi \in C_c^\infty ([-T,T]\times \C^n)$ 
with $ \| \varphi\|_{L^{p',q'}} \leq 1 $  this gives
\begin{equation*}
 \| Su\|_{L^q\left([-T,T];L^p\right)} \leq M.
\end{equation*}

For the pair $(\infty,2)$, take $\varphi \in C_c^\infty(\C^n)$, 
and by the same arguments as before
 \begin{equation*}
|\langle Su(\cdot,t), \varphi \rangle_z | \leq 
\underset{m \to \infty}{\limsup}\| Su_m(\cdot,t)\|_{L^2(\C^n)} \| 
\varphi \|_{L^2(\C^n)}
\end{equation*}
for almost every $t \in [-T,T]$. Taking supremum over all 
$\varphi \in C_c^\infty (\C^n)$ with $ \| \varphi\|_{L^2} 
\leq 1 $ this gives
\begin{equation*}
 \| Su(\cdot,t)\|_{L^2(\C^n)} \leq \underset{m \to \infty}{\limsup} 
\|Su_m\|_{L^{2,\infty}} \leq M .
\end{equation*}
Taking the essential supremum over $t \in [-T,T]$, we get the desired estimate.
\vspace{.4cm}
\end{proof}

\begin{proof}(of Theorem \ref{thm1})
We employ the traditional method of using contraction mapping theorem.  
Let $u \in E$. From (\ref{operator}) and the estimates in Lemma \ref{FEST}  and 
Proposition \ref{GEST}, we see that for all admissible pairs $(q,p)$, \Bea 
\| {\mathcal H} u\|_{L^q \left([-T,T],\tilde{W}^{1,p}\right)}\\ 
&\hspace{-3.6cm}\leq  \| e^{-it\mathcal{L}}f (z) \|_{L^q ([-T,T], \tilde{W}^{1,p})}+
\left \| \int_{0}^t e^{-i(t-s) \, \mathcal{L}}G(z,s,u(z,s)) \, ds 
\right \|_{L^q ([-T,T], \tilde{W}^{1,p})}\\ 
&\hspace{-13.4cm} \leq  C \, \| f\|_{\tilde{W}^{1,2}}\\
&\hspace{-4.3cm} +C \, 
T^{\frac{q-q\prime}{qq\prime}}\|u(z,t)\|_{L^\infty ([-T,T]; \tilde{W}^{1,2})} ^\alpha 
\| u(z,t)\|_{L^q ([-T,T],\tilde{W}^{1,p}(\C^n))} .\Eea
Clearly, this quantity is at most $M$, provided $T \leq \left( \frac{M - C\|f\|_{\tilde{W}^{1,2}}}{CM^{1+\alpha }}
\right)^{\frac{qq\prime} {q-q\prime}}$.

The above inequalities are also valid for the pair $(\infty,2)$ on LHS,
 by using Lemma \ref{FEST} and Proposition \ref{GESTfor2}. 
It follows that ${\mathcal H} u \in E=E_{T,M}$ provided  
\bea \label{T rel M} T \leq T_0 :=\left( \frac{M - C\|f\|_{\tilde{W}^{1,2}}}{CM^{1+\alpha }}
\right)^{\frac{qq\prime} {q-q\prime}}.\eea 

Now we show that  for a given $M$,
${\mathcal H}: E_{T,M} \to E_{T,M}$ is a contraction for  small $T$, i.e., for $T \leq \lambda T_0$ for some $\lambda<1$.

Let $u,v \in E_{T,M}$ with $T$ and $M$ as in $(\ref{T rel M})$. By mean value theorem on $\psi$ we see that
\bea \label{gdifr}|G(z,s,u) - G(z,s,v)| \leq |u-v| \, \Psi(u,v)\eea where
$\Psi(u,v)=\left( |w \partial_4\psi (x,y,s,w) | + |\psi(x,y,s,w) | 
\right) | _{w=\theta |u| + (1-\theta)|v| } $ for some $0<\theta<1$. 
Notice that in view of the condition (\ref{nlc2}) on $\psi$, 
$|\Psi(u,v)| \leq C(|u| \vee |v|)^\alpha$. Thus an application of H\"{o}lder's 
inequality in $z$-variable, 
using the relation $\frac{p'}{p}+\frac{\alpha p'}{p}=1$, followed by 
Sobolev embedding  result (Lemma \ref{SobL}) give
\bea \label{gdifraux}
\left \|   (u-v) \, \Psi(u,v) \right\|_{L^{p',q'}}
&\leq& C\| (|u| +|v|)^\alpha(u-v)\|_{L^{p',q'}} \nonumber \\
&\leq& CT^{\frac{q-q\prime}{qq\prime}}\, \|(|u|+ |v|)\|_{L^\infty(I, \tilde{W}^{1,2})}^\alpha 
\|u-v\|_{L^{p,q} } \nonumber\\
\label{psidif} &\leq& CT^{\frac{q-q\prime}{qq\prime}}\, M^\alpha \|u-v\|_{L^{p,q} } \eea
for  $2< q <\infty.$
In view of the estimates (\ref{stauxest}), (\ref{gdifr}) and 
(\ref{gdifraux}), the above leads to
\bea \label{contra1}  \left \| \int_0^t e^{-i(t-s){\mathcal
L}}\left[ G(u)-G(v)\right] (z,s)ds \right \|_{L^{p,q}} \nonumber 
&\leq& C\left \|   G(z,s,u)-G(z,s,v) \right\|_{L^{p',q'}}  \nonumber\\
 &\leq& C\, T^{\frac{q-q\prime}{qq\prime}} M^\alpha \|u-v\|_{L^{p,q} } \eea
for admissible pairs $(q,p)$. Similarly, using (\ref{retstrest}) 
instead of (\ref{stauxest}) , we also get
\bea \label{contra2}
 \left \| \int_0^t  e^{-i(t-s){\mathcal L}} [ G(u) - G(v) ] (z,s)ds 
\right \|_{L^{2,\infty}} \nonumber \\
 &\leq& C \, T^{\frac{q-q\prime}{qq\prime}} M^\alpha \|u-v\|_{L^{p,q} } .\eea
We choose $T= \lambda T_0$, for some $\lambda <1$. Thus from (\ref{contra1}) and (\ref{contra2}), we have
\bea \label{lambdacontra}d({\mathcal H}(u),{\mathcal H}(v))\leq 2C \, (\lambda T_0)^{\frac{q-q\prime}{qq\prime}} M^\alpha \|u-v\|_{L^{p,q} }. \eea

Taking $\lambda $ sufficiently small, we can make $k= 2 C \, (\lambda T_0)^{\frac{q-q\prime}{qq\prime}} M^\alpha
<1$ and we get $$d({\mathcal H}(u),{\mathcal H}(v))\leq k \, d(u,v).$$
This shows that ${\mathcal H}: E_{T,M} \to E_{T,M}$ is a contraction for $T=\lambda T_0$ and hence 
$\mathcal {H}$ has a unique fixed point in $E= E_{\lambda T_0, M}$. 

Note that we can fix a choice for $M$ and $T$ as follows: In view of the relation (\ref{T rel M}) between $T_0$  and $M$, when
$f \neq 0$, we can choose $M=2C\|f\|_{\tilde{W}^{1,2}}$ and $T=\lambda T_0 =  \lambda C_1\Vert f\Vert_{\tilde{W}^{1,2}}^{-\alpha \frac{qq\prime} 
{q-q\prime}}$
with $C_1=  (2C)^{-(1+\alpha){\frac{qq\prime} {q-q\prime}}}$, for any $\lambda<1$, i.e.,
\bea \label{lengthT} M=2C\|f\|_{\tilde{W}^{1,2}}~ \mbox{and}~T< (2C)^{-(1+\alpha){\frac{qq\prime} {q-q\prime}}}\Vert f\Vert_{\tilde{W}^{1,2}}^{-\alpha \frac{qq\prime} 
{q-q\prime}} ~~ \mbox{if} ~f \neq 0 .\eea
When $f\equiv 0$, $M$ can be any nonnegative number so that $T_0= (CM^\alpha)^{\frac{qq\prime} {q\prime-q}} $ and $T=\lambda T_0$ will work for any $\lambda <2^{\frac{qq\prime} {q\prime-q}}$. In particular, we can take   
\bea \label{lengthT2} M=1 \mbox{~and~}T< (2C)^{\frac{qq'}{q'-q}} 
\mbox{~if~} f \equiv 0.\eea 

Continuity: We will prove that $u\in C\left([-T,T];\tilde{W}^{1,2}
(\C^n)\right)$. Let $t_m\rightarrow t$ and set $u_m=u(z,t_m)$,  $S= L_j, M_j$ or the identity operator as before. Then
\Bea 
S(u_m-u)= e^{-it_m{\mathcal L}}Sf(z)-e^{-it{\mathcal L}}Sf(z) -i \int_0^{t_m} 
e^{-i(t_m-s){\mathcal L}}SG(z,s,u(z,s))ds\\
&\hspace{-9cm} +i \int_0^{t} e^{-i(t-s){\mathcal L}}SG(z,s,u(z,s))ds.
\Eea
Clearly $(e^{-it_m{\mathcal L}}-e^{-it{\mathcal L}})Sf(z)\rightarrow 0$ 
 in $L^2(\C^n)$ as $t_m\rightarrow t$.
 
 To deal with the other terms, we take $h \in L^2(\C^n)$ and estimate 
\begin{equation*}
\begin{aligned}
\left|\left\langle \int_0^{t} e^{-i(t_m-s){\mathcal L}}SG(z,s,u(z,s))ds-\int_0^{t} 
e^{-i(t-s){\mathcal L}}SG(z,s,u(z,s))ds,h \right\rangle\right| &\\
& \hspace{-10cm}
 =\left|\int_0^{t} \left\langle e^{-i(t_m-s){\mathcal L}}SG(z,s,u)-e^{-i(t-s){\mathcal L}}
SG(z,s,u),h \right\rangle ds\right| \\
&\hspace{-10cm} =\left|\int_0^{t} \left\langle SG,(e^{i(t_m-s){\mathcal L}}-
e^{i(t-s){\mathcal L}})h\right\rangle ds\right|\\
&\hspace{-10cm} \leq \Vert SG \Vert_{L^{p',q'}} \Vert e^{-is{\mathcal L}}
(e^{it_m{\mathcal L}}h-e^{it{\mathcal L}}h)\Vert_{L^{p,q}}\\
&\hspace{-10cm} \leq C\Vert SG \Vert_{L^{p',q'}} \Vert 
(e^{it_m{\mathcal L}}h-e^{it{\mathcal L}}h)\Vert_{L^2}. 
\end{aligned}
\end{equation*}
This shows that $\int_0^{t} e^{-i(t_m-s){\mathcal L}}SG(z,s,u(z,s))ds 
\rightarrow \int_0^{t} e^{-i(t-s){\mathcal L}}SG(z,s,u(z,s))ds$ 
weakly in $L^2(\C^n)$. This sequence is also bounded in $L^2$:
\Bea \left\Vert\int_0^{t} e^{-i(t_m-s){\mathcal L}}SG(z,s,u(z,s))ds\right\Vert_{L^2(\C^n)} &\\
&\hspace{-7cm}=\left\Vert e^{-i(t_m-t){\mathcal L}}\int_0^{t} e^{-i(t-s){\mathcal L}}
SG(z,s,u(z,s))ds\right\Vert_{L^2(\C^n)}\\
&\hspace{-8.8cm}=\left\Vert\int_0^{t} e^{-i(t-s){\mathcal L}}SG(z,s,u(z,s))ds\right\Vert_{L^2(\C^n)}\Eea
which is finite by Proposition \ref{GESTfor2}. 
Thus we have the  convergence  in $L^2(\C^n)$: $$\int_0^{t} e^{-i(t_m-s){\mathcal L}}SG(z,s,u(z,s))ds\rightarrow  
\int_0^{t} e^{-i(t-s){\mathcal L}}SG(z,s,u(z,s))ds.$$
Also since
$\left\Vert \int_t^{t_m} e^{-i(t_m-s){\mathcal L}}SG(z,s,u(z,s))ds
\right\Vert_{L^2}\leq C\Vert SG \Vert_{L^{q'}([t,t_m],L^{p'})}\rightarrow 0$ 
as $t_m\rightarrow t$, we conclude that $u(z,t_m)
\rightarrow u(z,t)$ in $\tilde{W}^{1,2}$.
\end{proof}

\begin{Remark} \label{Rk4.2}
The above proof also shows that if we consider the initial value problem with an arbitrary initial time $t_0$, then the solution exists on an 
interval $[t_0-T, t_0 + T]$ with $T$ given by the same inequalities as in (\ref{lengthT}) and (\ref{lengthT2}) 
but with $\|f\|_{\tilde{W}^{1,2}}$ replaced by $\| u(.,t_0)\|_{\tilde{W}^{1,2}}$.
\end{Remark}
The following two results are used in the proof of Theorem  \ref{wellposedness}.

\begin{Proposition}\label{forstability}
 Let $\Phi$ be a continuous complex valued function on $\C$ such that $|\Phi(w)|\leq C|w|^{\alpha}$ for $0\leq \alpha<\frac{2}{n-1}$.
 Suppose $\{ u_m\} $ be a sequence in 
 $L^q\left([a,b],\tilde{W}^{1,p}\right)\cap L^{\infty}([a,b],\tilde{W}^{1,2})$, $p=2+ \alpha, ~ q \geq 2$, such that 
$$\sup_{m\in \mathbb{N}} \Vert u_m\Vert_{L^{\infty}\left([a,b],\tilde{W}^{1,2}\right)} \leq M <\infty.$$
If $u_m\rightarrow u$ in $L^q([a,b],L^p(\C^n))$  then 
$[\Phi(u_m)-\Phi(u)]Su \rightarrow 0$ in $L^{q'}\left([a,b],L^{p'}(\C^n) \right),$ 
for  $S=I,L_j,M_j;1\leq j\leq n$.
\end{Proposition}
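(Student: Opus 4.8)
The idea is to reduce the mixed-norm statement to the dominated convergence theorem applied twice, first in the $z$ variable and then in the $t$ variable, with an $m$-independent majorant supplied by the Sobolev embedding. Write
\[
F_m(z,t)=\big[\Phi(u_m(z,t))-\Phi(u(z,t))\big]\,Su(z,t),\qquad g_m(t)=\|F_m(\cdot,t)\|_{L^{p'}(\C^n)}.
\]
Since the limit $u$ is the solution under consideration, it lies in $L^q([a,b],\tilde{W}^{1,p})$, so $Su\in L^q([a,b],L^p)$; as $[a,b]$ is bounded and $q\ge 2\ge q'$ this gives $Su\in L^{q'}([a,b],L^p)$, and in particular $\|Su(\cdot,t)\|_{L^p}<\infty$ for a.e.\ $t$. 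The goal is to show $\|F_m\|_{L^{q'}([a,b],L^{p'})}\to0$.

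First I would record a uniform majorant for $g_m$. By H\"older's inequality in $z$ with $\frac{\alpha}{p}+\frac1p=\frac1{p'}$ (recall $p=2+\alpha$),
\[
g_m(t)\le\|\Phi(u_m(\cdot,t))-\Phi(u(\cdot,t))\|_{L^{p/\alpha}(\C^n)}\,\|Su(\cdot,t)\|_{L^p(\C^n)},
\]
and the growth bound $|\Phi(w)|\le C|w|^\alpha$ yields $\|\Phi(u_m(\cdot,t))-\Phi(u(\cdot,t))\|_{L^{p/\alpha}}\le C\big(\|u_m(\cdot,t)\|_{L^p}^\alpha+\|u(\cdot,t)\|_{L^p}^\alpha\big)$. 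Since $\|u_m(\cdot,t)\|_{\tilde{W}^{1,2}}\le M$ for all $m$ and a.e.\ $t$, the Sobolev embedding (Lemma \ref{embedding}, which applies precisely because $p=2+\alpha<\frac{2n}{n-1}$, i.e.\ $\alpha<\frac{2}{n-1}$) gives $\|u_m(\cdot,t)\|_{L^p}\le CM$, and the same bound passes to $u$ in the $L^q([a,b],L^p)$ limit. Hence, uniformly in $m$,
\[
g_m(t)\le CM^\alpha\,\|Su(\cdot,t)\|_{L^p(\C^n)}=:\Psi(t),\qquad \Psi\in L^{q'}([a,b]).
\]
This $\Psi$ will be the majorant for the outer ($t$) integration.

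Next comes the convergence, where the subsequence principle must be invoked, because $u_m\to u$ in $L^q([a,b],L^p)$ yields a.e.-in-$t$ convergence of $u_m(\cdot,t)$ in $L^p(\C^n)$ only along subsequences. I would argue that every subsequence of $\{F_m\}$ has a further subsequence whose norm tends to $0$; this suffices. Along any given subsequence the scalar functions $t\mapsto\|u_m(\cdot,t)-u(\cdot,t)\|_{L^p}$ converge to $0$ in $L^q([a,b])$, so a further subsequence (not relabelled) converges to $0$ for a.e.\ $t$; fix such a $t$. Then $u_m(\cdot,t)\to u(\cdot,t)$ in $L^p(\C^n)$, and applying the subsequence principle once more at this fixed $t$ — extracting a sub-subsequence with $u_m(\cdot,t)\to u(\cdot,t)$ a.e.\ in $z$ and $|u_m(z,t)|\le h_t(z)$ for some $h_t\in L^p(\C^n)$ — one gets $F_m(\cdot,t)\to0$ a.e.\ in $z$, dominated by $C(h_t^\alpha+|u(\cdot,t)|^\alpha)|Su(\cdot,t)|\in L^{p'}(\C^n)$, whence $g_m(t)\to0$ by dominated convergence in $z$. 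This gives $g_m(t)\to0$ for a.e.\ $t$ along the chosen subsequence (for $\alpha>0$ this is exactly the continuity of the superposition map $v\mapsto\Phi(v)$ from $L^p$ into $L^{p/\alpha}$; for $\alpha=0$ the factor $\Phi(u_m)-\Phi(u)$ is merely bounded and the step is immediate).

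Finally, on the chosen subsequence we have $g_m(t)\to0$ a.e.\ with $0\le g_m\le\Psi\in L^{q'}([a,b])$, so dominated convergence in $t$ gives $\|F_m\|_{L^{q'}([a,b],L^{p'})}^{q'}=\int_a^b g_m(t)^{q'}\,dt\to0$ along that subsequence, and the subsequence principle then yields the full convergence. I expect the main difficulty to be precisely this nested bookkeeping: the $L^q([a,b],L^p)$ hypothesis does not transfer to pointwise-in-$t$ convergence of the entire sequence, so one must pass to subsequences both to reach a.e.-in-$z$ convergence and to recover the statement for the full sequence, all while keeping the majorant $\Psi$ fixed. The point that makes $\Psi$ independent of $m$ is the uniform Sobolev bound of Lemma \ref{embedding}, and it is here that the restriction $\alpha<\frac{2}{n-1}$ is used.
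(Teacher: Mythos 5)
Your proof is correct, and its skeleton coincides with the paper's: H\"older in $z$ with $\frac{p'}{p}+\frac{\alpha p'}{p}=1$, the uniform Sobolev bound $\|u_m(\cdot,t)\|_{L^p}\leq CM$ from Lemma \ref{embedding} to get an $m$-independent majorant $CM^\alpha\|Su(\cdot,t)\|_{L^p}\in L^{q'}([a,b])$, dominated convergence in $z$ then in $t$, and the subsequence principle to recover the full sequence. Where you genuinely diverge is in how the inner (fixed-$t$) convergence is dominated. The paper extracts one rapidly convergent subsequence with $\|u_{k+1}-u_k\|_{L^{q}([a,b],L^p)}\leq 2^{-k}$ and builds the single global function $H(z,t)=\sum_k|u_{k+1}(z,t)-u_k(z,t)|\in L^q([a,b],L^p)$, so that $|u_k|\leq |u|+H=:v$ a.e.\ in $(z,t)$; the $z$-slices of $(v^\alpha+|u|^\alpha)|Su|$ then serve as dominating functions simultaneously for a.e.\ $t$, and only one layer of subsequence extraction is needed before the two dominated convergence steps. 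You instead work slice by slice: a.e.-$t$ convergence of $u_m(\cdot,t)\to u(\cdot,t)$ in $L^p(\C^n)$, then at each fixed $t$ a further ($t$-dependent) extraction with a $t$-dependent dominating function $h_t$, resolved by the numeric subsequence principle at that $t$ — equivalently, continuity of the superposition map $v\mapsto\Phi(v)$ from $L^p$ to $L^{p/\alpha}$. Both arguments are sound; the paper's telescoping majorant buys a single, explicit domination valid jointly in $(z,t)$ and less subsequence bookkeeping, while your version is more modular (it isolates a standard Nemytskii continuity fact and never needs a joint-in-$(z,t)$ majorant) at the cost of the nested, $t$-dependent extractions, which you correctly flag and handle.
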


\begin{proof}
Since $u_m\to u$ in $L^q([a,b],L^p(\C^n))$, we can extract a subsequence 
still denoted by ${u_k}$ such that 
$$ \Vert u_{k+1}-u_{k}\Vert_{L^q\left([a,b],L^p(\C^n)\right)}\leq \frac{1}{2^k}$$ 
for all $k\geq 1$ and $u_k(z,t)\to u(z,t)$ a.e. Hence by continuity of $\Phi$,
\bea [\Phi(u_k)-\Phi(u)]Su \rightarrow 0 \hskip.1in~\mbox{for a.e} ~(z,t)\in \C^n \times [a,b] .\eea
We establish the norm convergence by appealing to a dominated convergence argument in 
$z$ and $t$ variables successively.

Consider the function  $H(z,t)=\sum_{k=1}^\infty |u_{k+1}(z,t)-u_k(z,t)|$. Clearly
  $H \in L^q\left([a,b],L^p(\C^n)\right)$, since the above series converges absolutely in
  that space. Also for $l>k$, $ |(u_l-u_k)(z,t)|\leq |u_l-u_{l-1}|+\cdots +|u_{k+1}-u_k|\leq H(z,t)$
 hence $ |u_k -u| \leq H.$
This leads to the pointwise almost everywhere inequality \Bea \label{dominate} |u_k(z,t) | \leq |u(z,t)| + H(z,t)=v(z,t).\Eea
Hence $$ |\left[\Phi(u_k)-\Phi(u)\right]Su(z,t)|^{p'} \leq  | [v^{\alpha}+|u|^{\alpha} ]  Su(z,t)|^{p'} .$$
Since $u, v \in L^q\left([a,b],L^p(\C^n)\right)$ and $p=2 + \alpha$, using H\"{o}lder's inequality with $\frac{p'}{p}+\frac{\alpha p'}{p}=1$, 
 we get
\bea \label{4.6}
  \int_{\C^n} |(v^{\alpha}+|u|^{\alpha})  Su(z,t)|^{p'} dz \\
\nonumber  \leq ( \| v(\cdot,t) \|^ {\alpha p'}_{L^p(\C^n)} \!\!\!\!\! &+& \!\!\!\!  \|u(\cdot,t) \|^ {\alpha p'} _{L^p(\C^n)}  ) \| Su(\cdot,t)\|_{L^p(\C^n)} ^{p'}.
 \eea

Thus using dominated convergence theorem in $z$-variable, we see that 
 \bea \label{4.8}\Vert \left[\Phi(u_k)-\Phi(u)\right]Su(\cdot,t)\Vert_{L^{p'}(\C^n)}
\rightarrow 0\eea
 as $k\rightarrow \infty,$ for a.e. $t$.

Again, in view of  Lemma \ref{embedding}, and H\"{o}lder's inequality as above, we get
\Bea
 \| [\Phi(u_k)&-&\Phi(u)]\, Su(\cdot,t) \|_{L^{p'}(\C^n)}\\ 
 &\leq& C\left(\| u_k\|_{L^{\infty}\left([a,b],\tilde{W}^{1,2}\right)}^{\alpha}+
\|u\|_{L^{\infty}\left([a,b],\tilde{W}^{1,2}\right)}^{\alpha}\right)\| Su(\cdot,t)\|_{L^{p}}\\
&\leq& C(M^\alpha + \|u\|_{L^{\infty}\left([a,b],\tilde{W}^{1,2}\right)}^{\alpha}) \| Su(\cdot,t)\|_{L^{p}}.
\Eea

Since $\|Su(\cdot,t) \|_{L^p(\C^n)} \in L^{q'}([a,b])$ and $q \geq 2$, an application of the H\"{o}lder's inequality in the $t$-variable shows that 
$$\int_a^b \|Su(\cdot,t) \|_{L^p(\C^n)} ^{q'} dt \leq [b-a]^{\frac{q-q'}{q} } \,  \|Su(\cdot,t) \|_{L^q([a,b], L^p(\C^n))} ^{q'}.$$
Hence a further application of dominated convergence theorem in 
(\ref{4.8}) shows  that $\Vert \left(\Phi(u_k)-\Phi(u)\right)Su\Vert_{L^{q'}([a,b],L^{p'})}\rightarrow 0$, as
$k\rightarrow \infty$. 

Thus we have shown that $\left[\Phi(u_{m_k})-\Phi(u)\right]Su\rightarrow 0$
in $L^{q'}([a,b],L^{p'}(\C^n))$
whenever  $u_m \to u$ in $L^{q}([a,b],L^{p}(\C^n))$. But the above arguments are also valid if we had started with any subsequence of $u_m$. 
It follows that any subsequence of $\left[\Phi(u_{m})-\Phi(u)\right]Su$ has a  subsequence that converges to $0$
in $L^{q'}([a,b],L^{p'}(\C^n))$. From this we conclude that the original sequence \linebreak $\left[\Phi(u_m)-\Phi(u)\right]Su$ converges to zero in $L^{q'}([a,b],L^{p'}(\C^n))$, hence the proposition.
  \end{proof}

\begin{Proposition}\label{forstability2} Let  $\{f_m\}_{m\geq 1}$ be a sequence in 
$ \tilde{W}^{1,2}(\C^n)$ such that $f_m\rightarrow f$ in $\tilde{W}^{1,2}(\C^n)$ as $m \rightarrow \infty$. 
Let  $u_m$ and $u$ be the solutions corresponding to the initial data $f_m$ and $f$ respectively, at time $t=t_0$. Then there exists $\tau_0$, depending on $\Vert f\Vert_{\tilde{W}^{1,2}}$ such that  $\Vert u_m-u\Vert_{L^{\infty}([t_0,t_0+\tau_0],\tilde{W}^{1,2}(\C^n))} \rightarrow 0$.

\end{Proposition}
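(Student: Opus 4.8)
The plan is to run Kato's stability scheme in three stages: fix a common existence interval with uniform bounds, upgrade $f_m\to f$ to convergence of the solutions in the weak metric $d$ (which feeds the hypothesis of Proposition \ref{forstability}), and then bootstrap from $L^q(L^p)$ convergence up to $\tilde{W}^{1,2}$ convergence via the retarded estimate (\ref{retstrest}) and the product formulas (\ref{L of upsi}), (\ref{M of upsi}). Throughout set $I=[t_0,t_0+\tau_0]$ and let $(q,p)$ be admissible with $p=\alpha+2$, $q>2$, and $\theta=\frac{q-q'}{qq'}>0$. For the first stage, note that $f_m\to f$ forces $\sup_m\|f_m\|_{\tilde{W}^{1,2}}<\infty$; discarding finitely many terms we may assume $\|f_m\|_{\tilde{W}^{1,2}}\le 2\|f\|_{\tilde{W}^{1,2}}$ for all $m$. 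Then Theorem \ref{thm1}, Remark \ref{Rk4.2} and the quantitative relation (\ref{lengthT}) furnish $M\sim\|f\|_{\tilde{W}^{1,2}}$ and $\tau_0>0$ depending only on $\|f\|_{\tilde{W}^{1,2}}$ such that every $u_m$ and $u$ lives in the single set $E_{\tau_0,M}$; in particular $\|u_m\|_{L^\infty(I,\tilde{W}^{1,2})},\|u_m\|_{L^q(I,\tilde{W}^{1,p})}\le M$. Shrinking $\tau_0$ further I would also arrange $2C\tau_0^{\theta}M^{\alpha}\le \tfrac12$.

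For the second stage, writing $u_m$ and $u$ as fixed points of $\mathcal{H}$ (with data $f_m$ and $f$) and subtracting gives
\[
u_m - u = e^{-i(t-t_0)\LL}(f_m-f) - i\int_{t_0}^{t} e^{-i(t-s)\LL}\bigl[G(u_m)-G(u)\bigr]\,ds .
\]
Measuring in $d=\|\cdot\|_{L^{2,\infty}}+\|\cdot\|_{L^{p,q}}$, the free part is at most $(1+C)\|f_m-f\|_{2}$ by the $L^2$ isometry and (\ref{mainstrest}), while the Duhamel part is controlled exactly as in (\ref{contra1}), (\ref{contra2}) by $2C\tau_0^{\theta}M^{\alpha}\|u_m-u\|_{L^{p,q}}\le\tfrac12 d(u_m,u)$. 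Absorbing yields $d(u_m,u)\le 2(1+C)\|f_m-f\|_{\tilde{W}^{1,2}}\to 0$; in particular $u_m\to u$ in $L^q(I,L^p(\C^n))$, which is precisely the hypothesis required by Proposition \ref{forstability}.

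For the third stage, apply $S\in\{I,L_1,\dots,M_n\}$ to the Duhamel identity; by Lemma \ref{commutativity} it passes through $e^{-it\LL}$ and the time integral. Using Lemma \ref{FEST} on the free term and (\ref{retstrest}), (\ref{stauxest}) on the Duhamel term, then summing over the finitely many $S$, I obtain for
\[
A_m := \|u_m-u\|_{L^\infty(I,\tilde{W}^{1,2})} + \|u_m-u\|_{L^q(I,\tilde{W}^{1,p})}
\]
a bound $A_m \le C_1\|f_m-f\|_{\tilde{W}^{1,2}} + C_2\max_{S}\bigl\|S[G(u_m)-G(u)]\bigr\|_{L^{p',q'}}$. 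The crux is to split each $S[G(u_m)-G(u)]$ via (\ref{L of upsi}), (\ref{M of upsi}) into \emph{diagonal} terms, where $S$ lands on $u_m-u$ (namely $\psi(|u_m|)S(u_m-u)$, $u_m\partial_4\psi(|u_m|)\Re(\tfrac{\bar u_m}{|u_m|}S(u_m-u))$, and $\partial_{x_j}\psi(|u_m|)(u_m-u)$), and \emph{coefficient-difference} terms, where the derivative lands on the fixed limit $u$. Each diagonal term is bounded, by Hölder in $z$ (with $\tfrac{p'}{p}+\tfrac{\alpha p'}{p}=1$), Lemma \ref{embedding}, (\ref{nlc2}) and Hölder in $t$, by $C\tau_0^{\theta}M^{\alpha}A_m\le\tfrac12 A_m$; since $A_m\le 4M<\infty$ this is absorbed into the left side.

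The remaining coefficient-difference terms I would each write as $[\Phi(u_m)-\Phi(u)]\,Su$ (or $[\Phi(u_m)-\Phi(u)]\,\overline{Su}$) for a continuous $\Phi$ with $|\Phi(w)|\le C|w|^{\alpha}$, and dispatch by Proposition \ref{forstability}: the $\psi$-term gives $\Phi(w)=\psi(x,y,t,|w|)$ acting on $Su$, the $\partial_{x_j}\psi$-term gives $\Phi(w)=\partial_{x_j}\psi(x,y,t,|w|)$ acting on $Su=u$, and for the $\partial_4\psi$-term I first expand $\Re\bigl(\tfrac{\bar u_m}{|u_m|}Su\bigr)=\tfrac12\bigl(\tfrac{\bar u_m}{|u_m|}Su+\tfrac{u_m}{|u_m|}\overline{Su}\bigr)$ and collect coefficients, obtaining $\Phi_a(w)=\tfrac12|w|\partial_4\psi$ on $Su$ and $\Phi_b(w)=\tfrac{w^2}{2|w|}\partial_4\psi$ on $\overline{Su}$, both satisfying $|\Phi_a|,|\Phi_b|\le C|w|^{\alpha}$ by the bound on $|w|\partial_4\psi$ in (\ref{nlc2}). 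Since the proof of Proposition \ref{forstability} uses only $|Su|$, it applies verbatim to $\overline{Su}$, so the total of these terms, call it $\delta_m$, tends to $0$ in $L^{p',q'}$. Hence $A_m\le C_1\|f_m-f\|_{\tilde{W}^{1,2}}+\tfrac12 A_m+\delta_m$, giving $A_m\le 2C_1\|f_m-f\|_{\tilde{W}^{1,2}}+2\delta_m\to 0$ and in particular $\|u_m-u\|_{L^\infty(I,\tilde{W}^{1,2})}\to 0$. The main obstacle is exactly this last decomposition: isolating the genuinely dangerous piece, where the full first-order derivative falls on the limit $u$ while only the nonlinear coefficient carries the difference, so that it matches the form of Proposition \ref{forstability}; the non-holomorphic factor $\Re(\tfrac{\bar u}{|u|}\,\cdot)$ in the $\partial_4\psi$ term is what forces the splitting into $Su$ and $\overline{Su}$ pieces, while all the diagonal terms reduce to routine Hölder–Sobolev absorption.
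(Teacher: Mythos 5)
Your proposal follows essentially the same route as the paper's own proof: a common existence interval with uniform bounds coming from (\ref{lengthT}), convergence $u_m\to u$ in $L^q(L^p)$ by absorbing the Duhamel difference (the paper's (\ref{convergenceinLpq})), the decomposition of $S(G_m-G)$ into diagonal terms absorbed by H\"older--Sobolev and coefficient-difference terms dispatched by Proposition \ref{forstability} (the paper's identity (\ref{stability})), and the retarded estimate (\ref{retstrest}) to climb back up to $L^\infty(I,\tilde{W}^{1,2})$. In fact, at one point you are more careful than the paper: the paper simply asserts that Proposition \ref{forstability} handles the term $(\partial_4\psi_m)u_m\Re\bigl(\tfrac{\overline{u_m}}{|u_m|}Su\bigr)-(\partial_4\psi)u\Re\bigl(\tfrac{\overline{u}}{|u|}Su\bigr)$, which is not literally of the form $[\Phi(u_m)-\Phi(u)]Su$; your expansion of $\Re$ into the $\Phi_a(w)=\tfrac12|w|\partial_4\psi$ piece acting on $Su$ and the $\Phi_b(w)=\tfrac{w^2}{2|w|}\partial_4\psi$ piece acting on $\overline{Su}$ makes that reduction explicit and is exactly what is needed.

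The one genuine flaw is in your first stage, at $f=0$. You cannot ``discard finitely many terms and assume $\Vert f_m\Vert_{\tilde{W}^{1,2}}\le 2\Vert f\Vert_{\tilde{W}^{1,2}}$'' when $f=0$ (that would force $f_m=0$ eventually), and your prescriptions $M\sim\Vert f\Vert_{\tilde{W}^{1,2}}$ and (\ref{lengthT}) degenerate there: $E_{\tau_0,0}$ reduces to the zero function and $T_0$ in (\ref{T rel M}) is not defined. This is precisely why the paper splits its proof into the cases $f\equiv 0$ (where it uses (\ref{lengthT2}), the bound $M_m\le 1$ for large $m$, and uniqueness of the fixed point to identify $u\equiv 0$) and $f\neq 0$. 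Your uniform treatment is easily repaired --- for instance assume $\Vert f_m\Vert_{\tilde{W}^{1,2}}\le \Vert f\Vert_{\tilde{W}^{1,2}}+1$ and take $M=\max\{1,\,2C(\Vert f\Vert_{\tilde{W}^{1,2}}+1)\}$, which still gives a uniform lower bound on the existence times and lets all three of your stages run verbatim, and is arguably cleaner than the paper's case split since the absorption argument never needs to know that $u\equiv 0$ --- but as written the proposal does not cover the case $f=0$ of the statement.
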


\begin{proof}
 Let $\epsilon>0$ and $\tilde{\tau}<\lambda T_0$, where $T_0$ is as in  (\ref{T rel M}) and $\lambda$ as in (\ref{lambdacontra}).
 Since the time interval of existence is given by $ \tilde{W}^{1,2}(\C^n)$ norm of the initial data and since
$\| f_m\|_{ \tilde{W}^{1,2}(\C^n)} \to\| f\|_{ \tilde{W}^{1,2}(\C^n)}  $
we can assume, by taking $m$  large if necessary, that both the 
solutions $u$ and $u_m$ are defined on $[t_0,t_0+\tilde{\tau}]$. Setting 
$G_m(z,t)=G(z,t,u_m(z,t))$, we have
\Bea
(u_m-u)(z,t)=e^{-i(t-t_0)\LL}(f_m-f)(z,t)-i
\int_{t_0}^t e^{-i(t-s)\LL}(G_m-G)(z,s)ds
\Eea
for all $t \leq \tilde{\tau}$. In view of  Lemma \ref{commutativity}, with $S=I, L_j,M_j$, we also have 
\begin{equation}
\begin{aligned}\label{}
S(u_m-u)(z,t)=e^{-i(t-t_0)\LL}S(f_m-f)(z,t) \nonumber\\
& \hspace{-3cm}-i\displaystyle\int_{t_0}^t e^{-i(t-s)\LL}S(G_m-G)(z,s)ds. \nonumber
\end{aligned}
\end{equation}
Thus by estimates in Theorem \ref{strichartzs}, we see that for any $\tilde{\tau_0}\leq \tilde{\tau}$
\begin{eqnarray}
\label{stability1}
\hspace{1cm}\|S(u_m-u)(z,t)\|_{L^{q}([t_0,t_0+\tilde{\tau_0}],L^p)} &\leq& C\|S(f_m-f)\|_2 \\
&& + C\|S(G_m-G)\|_{L^{q'}([t_0,t_0+\tilde{\tau_0}],L^{p'})} \nonumber
\end{eqnarray} 
for $S=L_j, M_j$ and $I$, the identity operator for admissible pairs $(q,p)$.

First we consider the case $f\equiv 0$. In this case the solution $u\equiv 0$ since ${\mathcal H}(0)=0$ and the fixed point of ${\mathcal H}$ in $E$ is unique.
 Thus by estimates in Theorem \ref{strichartzs}, we see that for any $\tilde{\tau_0}\leq \tilde{\tau}$
\begin{eqnarray}
\label{stability10}
\hspace{1cm}\|Su_m(z,t)\|_{L^{q}([t_0,t_0+\tilde{\tau_0}],L^p)} &\leq& C\|Sf_m\|_2 \\
&& + C\|SG_m\|_{L^{q'}([t_0,t_0+\tilde{\tau_0}],L^{p'})} \nonumber
\end{eqnarray} 
for $S=L_j, M_j$ and $I$, the identity operator.
For $S=I$, using estimate (\ref{Gest1}), we get
\begin{eqnarray*}
\|u_m(z,t)\|_{L^{q}([t_0,t_0+\tilde{\tau_0}],L^p)} &\leq& C\|f_m\|_2 \\
&& + C\tilde{\tau}^{\frac{q-q\prime}{qq\prime}}\|u_m\|_
{L^\infty ([t_0,t_0+\tilde{\tau_0}]; \tilde{W}^{1,2})} ^\alpha 
\| u_m\|_{L^q ([t_0,t_0+\tilde{\tau_0}],\tilde{W}^{1,p})}. \nonumber
\end{eqnarray*} 

Now we observe that $\| u_m\|_{L^q ([t_0,t_0+\tilde{\tau_0}],\tilde{W}^{1,p})}$ is uniformly bounded. In fact by choice of
$\tilde{\tau_0}$, and from the local existence theorem proved above, we have $\| u_m\|_{L^q ([t_0,t_0+\tilde{\tau_0}],\tilde{W}^{1,p})}\leq M_m$ which is given by equations (\ref{lengthT}) and (\ref{lengthT2}), for each $m$. Since

\Bea
M_m= \left\{ 
  \begin{array}{cc}
    1 & \mbox{~if~} f_m=0 \\ 
    2C \| f_m\|_{\tilde{W}^{1,2}} & \mbox{~if~} f_m \neq 0\\ 
  \end{array}
\right.
\Eea
and  $ \| f_m\|_{\tilde{W}^{1,2}} \to  \| f\|_{\tilde{W}^{1,2}}=0$, we have $M_m \leq 1$ for large $m$.

Now choosing $\tilde{\tau_0}$ a value of $\tilde{\tau}$ small so that $C \tilde{\tau}^{\frac{q-q'}{qq'}} < \frac{1}{2}$, 
we see that
\begin{eqnarray}\label{convergenceinLpq10}
\Vert u_m\Vert_{L^q([t_0,t_0+\tilde{\tau_0}],L^p(\C^n))}  \leq 2C\Vert f_m\Vert_{L^2(\C^n)}\rightarrow 0 ~\mbox{as}~ m\rightarrow \infty.
\end{eqnarray}
Thus in view of  estimate (\ref{GESTfor2.1})  and estimate (\ref{retstrest}) in Theorem \ref{strichartzs} we see that $u_m\rightarrow 0$ in $L^{\infty}([t_0,t_0+\tilde{\tau_0}],L^2(\C^n))$.

Similarly, using the estimates (\ref{Gest2})  and (\ref{stability10}), we see that $Su_m \rightarrow 0$ in $L^q([t_0,t_0+\tilde{\tau_0}],L^p(\C^n))$. It follows from estimate (\ref{GESTfor2.2}) that
 $u_m \rightarrow 0$ in $L^{\infty}([t_0,t_0+\tilde{\tau_0}],\tilde{W}^{1,2}(\C^n))$.

Now we consider the case $f\neq 0$.
 We choose $m$ sufficiently large such that $\Vert f_m-f\Vert_{\tilde{W}^{1,2}} < \epsilon <
 \frac{1}{2}\Vert f\Vert_{\tilde{W}^{1,2}}$. Therefore we have 
 $ \Vert f_m\Vert_{\tilde{W}^{1,2}} \leq \frac{3}{2}\Vert f\Vert_{\tilde{W}^{1,2}}$
and hence again by (\ref{lengthT}), $M_m :=2C\Vert f_m\Vert_{\tilde{W}^{1,2}} \leq 2M:=4C\Vert f\Vert_{\tilde{W}^{1,2}}$. 
Now by (\ref{gdifr}),(\ref{gdifraux}), and the fact that $M_m\leq 2M$ we get
\begin{equation}\label{GmGestimate}
\|G_m-G\|_{L^{q'}([t_0,t_0+\tilde{\tau}],L^{p'}(\C^n))} \leq C \tilde{\tau}^{\frac{q-q'}{qq'}}
\Vert f\Vert_{\tilde{W}^{1,2}}^{\alpha} \Vert u_m-u\Vert_{L^q([t_0,t_0+\tilde{\tau}],L^p(\C^n))}.
\end{equation}
This gives for the case $S=I$, the inequality 
\begin{equation*}
\begin{aligned}
\Vert u_m-u\Vert_{L^q([t_0,t_0+\tilde{\tau}],L^p(\C^n))} &\\
&\hspace{-2.3cm}\leq C\Vert f_m-f\Vert_{L^2}+C \tilde{\tau}^{\frac{q-q'}{qq'}}
\Vert f\Vert_{\tilde{W}^{1,2}}^{\alpha} \Vert u_m-u\Vert_{L^q([t_0,t_0+\tilde{\tau}],L^p(\C^n))}.
\end{aligned}
\end{equation*}
Now choosing $\tilde{\tau_0}$ a value of $\tilde{\tau}$ small so that $C \tilde{\tau}^{\frac{q-q'}{qq'}}
\Vert f\Vert_{\tilde{W}^{1,2}}^{\alpha} < \frac{1}{2}$, we see that
\begin{eqnarray}\label{convergenceinLpq}
\Vert u_m-u\Vert_{L^q([t_0,t_0+\tilde{\tau_0}],L^p(\C^n))}  \leq C\Vert f_m-f\Vert_{L^2(\C^n)}< C \epsilon 
\end{eqnarray} for large $m$.
Thus in view of Theorem \ref{strichartzs}, (\ref{GmGestimate}) and (\ref{convergenceinLpq}) we see that $u_m\rightarrow u$ in $L^{\infty}([t_0,t_0+\tilde{\tau_0}],L^2(\C^n))$.

For $S=L_j, M_j$ using (\ref{L of upsi}), (\ref{M of upsi}) with the notation $\psi_m= \psi\left(z,t,|u_m(z,t)|\right)$, we have 
\begin{equation}
\begin{aligned}\label{stability}
S(G_m-G)=\psi_m S(u_m-u)+(\psi_m-\psi)Su+(\partial_j\psi_m) 
(u_m-u)\\
&\hspace{-8.6cm}+(\partial_j\psi_m-\partial_j\psi)u
+(\partial_4\psi_m) u_m\Re(\frac{\overline{u_m}}{|u_m|}S(u_m-u))\\
&\hspace{-8.6cm}+(\partial_4\psi_m) u_m\Re(\frac{\overline{u_m}}
{|u_m|}Su)-(\partial_4\psi) u\Re(\frac{\overline{u}}{|u|}Su)
\end{aligned}
\end{equation}
where $\partial_j= \partial_{x_j}$ for $S=L_j$ and 
$\partial_j= \partial_{y_j}$ for $S=M_j,~1\leq j\leq n$.

Using the assumption (\ref{nlc2}) on $\psi$,  Lemma \ref{embedding}, and the fact that $M_m\leq 2M$, similar computations 
as in Proposition \ref{Gest} shows that 
$$
\Vert\psi_m S(u_m-u)\Vert_{L^{q'}\left([t_0,t_0+\tilde{\tau_0}],L^{p'}\right)} \leq C{\tilde{\tau_0}}^{\frac{q-q'}{qq'}}\Vert f\Vert_{\tilde{W}^{1,2}}^{\alpha}\Vert S(u_m-u)
\Vert_{L^q\left([t_0,t_0+\tilde{\tau_0}],L^p\right)}
$$
$$
\hspace*{-1cm} \Vert(\partial_j\psi_m)(u_m-u)\Vert_{L^{q'}\left([t_0,t_0+\tilde{\tau_0}],L^{p'}\right)} 
\leq C{\tilde{\tau_0}}^{\frac{q-q'}{qq'}}\Vert f\Vert_{\tilde{W}^{1,2}}^{\alpha}\Vert u_m-u\Vert_{L^q\left([t_0,t_0+\tilde{\tau_0}],L^p\right)}
$$
\begin{equation*}
\begin{split}
&\Vert (\partial_4\psi_m) u_m\Re(\frac{\overline{u_m}}{|u_m|}S(u_m-u)) 
\Vert_{L^{q'}\left([t_0,t_0+\tilde{\tau_0}],L^{p'}\right)} \\
& \hspace{5cm} \leq  C{\tilde{\tau_0}}^{\frac{q-q'}{qq'}}\Vert f\Vert_{\tilde{W}^{1,2}}^{\alpha}\Vert S(u_m-u)\Vert_{L^q\left([t_0,t_0+\tilde{\tau_0}],L^p\right)}.
\end{split}
\end{equation*}

Since  $\Vert u_m-u\Vert_{L^q([t_0,t_0+\tilde{\tau_0}],L^p(\C^n))}\rightarrow 0$ and $G$ is $C^1$, so in view of the condition (\ref{nlc2}) on $\psi$ and Proposition \ref{forstability}, the sequences $(\psi_m-\psi)Su,(\partial_j\psi_m-\partial_j\psi)u$ and $(\partial_4\psi_m) u_m\Re(\frac{\overline{u_m}}{|u_m|}Su)-(\partial_4\psi) u\Re(\frac{\overline{u}}{|u|}Su)$ converge to zero 
in $L^{q'}([t_0,t_0+\tilde{\tau_0}],L^{p'})$ as $m\rightarrow \infty$. Hence for large $m$
\Bea \|(\psi_m-\psi)Su \|_{L^{q'}([t_0,t_0+\tilde{\tau_0}],L^{p'})} < \epsilon\\
\| (\partial_j\psi_m-\partial_j\psi)u \|_{L^{q'}([t_0,t_0+\tilde{\tau_0}],L^{p'})}  < \epsilon\\
\left \| (\partial_4\psi_m) u_m\Re
(\frac{\overline{u_m}}{|u_m|}Su-(\partial_4\psi) 
u\Re(\frac{\overline{u}}{|u|}Su)) \right\|_{L^{q'}([t_0,t_0+\tilde{\tau_0}],L^{p'})} < \epsilon.
\Eea
Using these estimates in (\ref{stability}), we get
\begin{eqnarray*}
\Vert S(G_m-G)\Vert_{L^{q'}([t_0, t_0+\tilde{\tau_0}],L^{p'})}  \leq &\!\!\!\!\!  C \tilde{\tau_0}^{\frac{q-q'}{qq'}}\Vert f\Vert_{\tilde{W}^{1,2}}^{\alpha}\Vert S(u_m-u)\Vert_{{L^{q}([t_0,t_0+\tilde{\tau_0}],L^{p}(\C^n))}} \\
& + C\tilde{\tau_0}^{\frac{q-q'}{qq'}}\Vert f\Vert_{\tilde{W}^{1,2}}^{\alpha}\Vert u_m-u\Vert_{L^{q}([t_0,t_0+\tilde{\tau_0}],L^{p})} + 3 \epsilon. \nonumber
\end{eqnarray*}
Thus from estimates (\ref{stability1}) and (\ref{convergenceinLpq}) we see that,
\Bea\Vert S(u_m-u)\Vert_{L^q([t_0,t_0+\tilde{\tau_0}],L^p)}\leq C\Vert S(f_m-f)
\Vert_{L^2}+C\Vert S(G_m-G)\Vert_{L^{q'}([t_0,t_0+\tilde{\tau_0}],L^{p'})}\\
&\hspace{-10.2cm} \leq  C\tilde{\tau_0}^{\frac{q-q'}{qq'}}
\Vert f\Vert_{\tilde{W}^{1,2}}^{\alpha}\Vert S(u_m-u)\Vert_{L^q([t_0,t_0+\tilde{\tau_0}],L^p)} \\
+ ~ C^2\tilde{\tau_0}^{\frac{q-q'}{qq'}}\Vert f\Vert_{\tilde{W}^{1,2}}^{\alpha} \epsilon+(C+3) \epsilon
\Eea
for large $m$. Now choose $\tau_0< \tilde{\tau_0},$ so that 
$$  C\tau_0^{\frac{q-q'}{qq'}}
\Vert f\Vert_{\tilde{W}^{1,2}}^{\alpha}\leq\frac{1}{2}$$
and we see that
$\Vert S(u_m-u)\Vert_{L^q([t_0,t_0+\tau_0],L^p)}\leq (3C+6)\epsilon$ for large $m$. This shows that
$\Vert S(u_m-u)\Vert_{L^q([t_0,t_0+\tau_0],L^p)} \rightarrow 0$ as $m\rightarrow \infty$.

\end{proof}
\begin{Remark}\label{forstability3} The conclusions of Proposition \ref{forstability2} is also valid for the left side interval $[t_0-\tau_0,t_0]$.
\end{Remark}
\begin{proof}(of Theorem \ref{wellposedness}) 
By local existence (Theorem 1), the solution exists in  $C([-T,T]: \tilde{W}^{1,2}(\C^n))$. Now, if $\| u( \cdot,T)\|_{\tilde{W}^{1,2}(\C^n)}< \infty,$ the argument in the proof of Theorem 1 can be carried out with $T$ as the initial time, to extend
the solution to the interval $[T,T_1]$. This procedure can be continued and we can get a sequence $\{T_j\}$ such that
$T<T_1<T_2 <\dots <T_n <...$ as long as $\| u( \cdot,T_j)\|_{\tilde{W}^{1,2}(\C^n)}< \infty.$  Let $T_+= \displaystyle{\sup_j}~ T_j$ so that the solution extends to $[0,T_+)$. In the same way we can extend the solution to the left side to the interval $(T_-,0]$ to get a solution in $C((T_-,T_+), \tilde{W}^{1,2}(\C^n))$.
This leads to the following proof:

Blowup alternative:
Suppose  $T_+<\infty$ and 
$ \underset{ t \to T_+}{\lim}\Vert u(z,t)\Vert_{\tilde{W}^{1,2}} =M_0< \infty$.
Then we can choose a sequence $t_j \uparrow T_+$ such that 
$\Vert u(z,t_j)\Vert_{\tilde{W}^{1,2}} \leq M_0$. From local existence
and in view of Remark \ref{Rk4.2}
we can choose $T_j = \frac{1}{10} \, C_1\| u(.,t_j)\|_{\tilde{W}^{1,2}} ^{-\alpha \frac{qq\prime} {q-q\prime}}$ such that $u \in C([t_j- T_j, t_j + T_j], 
\tilde{W}^{1,2})$. Hence by assumption  
$T_j \geq  \frac{1}{10} C_1M_0^{-\alpha \frac{qq\prime} {q-q\prime}}$
, a constant independent of $t_j$, for $q > 2$. Thus we can choose 
$j$ so large that $t_j+T_j>T_+$, which contradicts maximality of $T_+$. 
Hence  if $T_+< \infty$,
$ \underset{t \to T_+}{\lim}\Vert u(z,t)\Vert_{\tilde{W}^{1,2}} =\infty.$ 
Similarly, we can show that 
$\underset{t\to T_-}{\lim} \Vert u(.,t)\Vert_{\tilde{W}^{1,2}} = \infty$, 
if $T_- > - \infty$.

Uniqueness: Let $(T_-,T_+)$ be the maximal interval 
about zero, such that the solution $u(x,t)$ exists in $C((T_-,T_+), 
\tilde{W}^{1,2})$ and in Theorem \ref{thm1}, we have already shown that the 
solution exists on $[-T,T] \subset (T_-,T_+)$. We first consider the subinterval  
$[0,T] \subset [0,T_+)$. 

Suppose that there exist 
two solutions $u$ and $v$ of equations (\ref{mainpde}) and (\ref{inidata}) on $[0,T_+)$.
In particular for $\tau \in (T, T_+),$ we have
\Bea u(z,\tau)&=& e^{-i(\tau-T){\mathcal L}} u(z,T) -i \int_T^\tau e^{-i(\tau-s){\mathcal
L}}G(z,s,u(z,s))ds,\\
v(z,\tau)&=& e^{-i(\tau-T){\mathcal L}} v(z,T) -i \int_T^\tau e^{-i(\tau-s){\mathcal
L}}G(z,s,v(z,s))ds.\Eea
Since the solution given by the contraction mapping
is continuous and unique on $[0,T] \subset [0,T_+)$, we have $u(z, T)=v(z,T)$. 
Hence using an inequality as in (\ref{contra1}), this leads to
\Bea \| u-v\|_{L^q\left([T,\tau],L^p(\C^n)\right)} &= &  \left \| \int_T^\tau e^{-i(t-s){\mathcal
L}}\left(G(u)-G(v)\right)(z,s)ds \right \|_{L^q\left([T,\tau],L^p(\C^n)\right)}\\
&\leq& C[\tau-T]^{\frac{q-q'}{qq'}} \, M_{T,\tau}^\alpha \|u-v\|_{L^q\left([T,\tau],L^p(\C^n)\right)}\Eea
for all $ \tau \in [T,T_+)$ where $M_{T,\tau}=\max \{\Vert u\Vert_{L^{\infty}
\left([T,\tau],\tilde{W}^{1,2}\right)},\Vert v\Vert_{L^{\infty}\left([T,\tau],\tilde{W}^{1,2}\right)}\}$. 
Since $u,v\in C\left((T_-,T_+),\tilde{W}^{1,2}\right)$, we have
 $M_{T,\tau}<\infty$.
In particular choose $\tilde{T} \in [T,T_+)$ 
such that $C|\tilde{T}-T|^{\frac{q-q'}{qq'}} \, M_{T,\tilde{T}}^\alpha =c < 1$,  so that
$$0 \leq (1-c) \|u-v \|_{L^q([T,\tilde{T}]: L^p(\C^n))} \leq 0.$$
Hence $u=v$ on the larger interval $[0,\tilde{T}].$ 

Now let $\theta$ =sup$\{\tilde{T}: 0<\tilde{T}
<T_+:\Vert u-v\Vert_ {L^q([0,\tilde{T}],L^p)}=0\}$. 
If $\theta<T_+$, then by the above observation, 
$\Vert u-v\Vert_ {L^q([0,\theta+\epsilon],L^p)}=0$
for some $\epsilon>0$,which contradicts the definition 
of $\theta$. Thus we conclude that $\theta=T_+$, proving the uniqueness on $[0,T_+)$. 
Similarly one can show uniqueness on $(T_-,0]$.

Stability: Let  $\{f_m\}_{m\geq 1}$ be a sequence in 
$ \tilde{W}^{1,2}(\C^n)$ such that $f_m\rightarrow f$ in $\tilde{W}^{1,2}$ as $m \rightarrow \infty$. 
Let  $u_m$ and $u$ be the solutions corresponding to the initial data $f_m$ and $f$ respectively. 
Let $(T_-,T_+)$ and $(T_{m-},T_{m+})$ be maximal intervals for the solutions $u$ and
$u_m$ respectively and $I \subset (T_-,T_+)$  be a compact interval. 

The key idea is to extend  the stability result proved in Proposition \ref{forstability2} to the interval $I$ 
by covering it with finitely many intervals obtained by successive application of Proposition 
\ref{forstability2}. This is possible provided $u_m$ is defined on $I$, for all but finitely many $m$. 
In fact, we prove $I \subset (T_{m-},T_{m+})$ for all but finitely many $m$.

We can assume that $0\in I=[a,b]$, and give a proof by the method of contradiction. 
Suppose there exist infinitely many $T_{m_j +} \leq b$, let  
$c= \liminf T_{m_j+}$. Then for $\epsilon>0, ~ [0, c-\epsilon] \subset [0,T_{m_j+})$ for all  $m_j$ sufficiently large and $u_{m_j}$ are defined on $ [0, c-\epsilon]$. 

By compactness, the stability
result proved in Proposition \ref{forstability2} can be extended to the interval $[0, c-\epsilon]$ 
by covering it with finitely many intervals obtained by successive application of Proposition \ref{forstability2}.
Hence 
$$ \Vert u_{m_j}(.,c-\epsilon)\Vert_{\tilde{W}^{1,2}}\rightarrow  \Vert u(.,c-\epsilon)\Vert_{\tilde{W}^{1,2}} ~~\mbox{as} ~~ j\rightarrow \infty.$$ 
Also by continuity we have 
$$\Vert u(.,c-\epsilon)\Vert_{\tilde{W}^{1,2}} \rightarrow \Vert u(.,c)\Vert_{\tilde{W}^{1,2}}  ~~\mbox{as} ~~\epsilon\rightarrow 0.$$ 
Thus, for any $\delta>0$, we have 
\bea \label{deltaineq} \Vert u_{m_j}(.,c-\epsilon)\Vert_{\tilde{W}^{1,2}}^{-\alpha \frac{qq'}{q-q'}}> \delta 
\mbox{~whenever ~}\Vert u(.,c)\Vert_{\tilde{W}^{1,2}}^{-\alpha \frac{qq'}{q-q'}}>\delta,\eea  for sufficiently small $\epsilon$  and for all $j\geq j_0(\epsilon)$. 
Therefore by applying the local existence theorem, with $c-\epsilon$ as the initial time, we  see that  $u_{m_j}$ extends to 
 $[0, c-\epsilon +\frac{C_1}{10} \Vert u_{m_j}(.,c-\epsilon)\Vert_{\tilde{W}^{1,2}}^{-\alpha \frac{qq'}{q-q'}} ]$ for large $j$.
 Now choosing  $\epsilon<\frac{C_1}{20} \, \delta$, we have by (\ref{deltaineq})
$$c-\epsilon +\frac{C_1}{10} \Vert u_{m_j}(.,c-\epsilon)\Vert_{\tilde{W}^{1,2}}^{-\alpha \frac{qq'}{q-q'}}> c +\frac{C_1}{20} \delta~~~\mbox{for all}~~ j\geq j_0(\epsilon).$$
It follows that $T_{m_j+}\geq c+\frac{C_1}{20}\delta$, hence contradicts the fact that $ \liminf T_{m_j+}=c.$

Similarly we can show that
$[a,0] \subset (T_{m-},0]$ for all but finitely many $m$ which completes the proof of stability.
\end{proof}

\section{Appendix} In this section we show the equivalence of  
the differential equations (\ref{mainpde}), (\ref{inidata})
and the integral equation (\ref{solution}). 

\begin{Lemma}\label{equivalence} Let $u \in L^\infty(I, 
\tilde{W}^{1,2}(\C^n)) \cap L^q(I, \tilde{W}^{1,p}(\C^n))$ and $G$ as in (\ref{nlc1}), (\ref{nlc2}). 
Then $u$ satisfies the nonlinear Schr\"{o}dinger equation 
(\ref{mainpde}), with initial data  (\ref{inidata}) if and only if $u$ satisfies 
the integral equation (\ref{solution}).
\end{Lemma}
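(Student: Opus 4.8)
The plan is to read (\ref{mainpde}) in the weak sense against Schwartz test functions, which is forced by the low regularity of $u$: since $u$ lies only in the first order space $\tilde{W}^{1,2}$, the second order quantity $\LL u$ cannot be taken in $L^2$, so I interpret it as the tempered distribution $\phi \mapsto \langle u, \LL\phi\rangle$, using the self adjointness of $\LL$ on $\mathcal{S}(\C^n)$. Concretely, I declare that $u$ satisfies (\ref{mainpde}), (\ref{inidata}) if $u(\cdot,0)=f$ and, for every $\phi \in \mathcal{S}(\C^n)$, the scalar function $t \mapsto \langle u(\cdot,t),\phi\rangle$ is absolutely continuous with $i\,\frac{d}{dt}\langle u(\cdot,t),\phi\rangle = \langle u(\cdot,t),\LL\phi\rangle + \langle G(\cdot,t,u),\phi\rangle$. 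All pairings make sense because $\tilde{W}^{1,2}\hookrightarrow L^2$ and into $L^p$ by Lemma \ref{embedding}, while Proposition \ref{Gest} guarantees $G(\cdot,t,u)\in L^{q'}(I;L^{p'})$, so $\langle G,\phi\rangle \in L^{q'}(I) \subset L^1_{loc}(I)$.

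Forward direction (integral $\Rightarrow$ differential): starting from (\ref{solution}), I pair with $\phi\in\mathcal{S}$ and differentiate in $t$. Writing the free evolution through its spectral expansion $e^{-it\LL}g=\sum_k e^{-it(2k+n)}P_k g$ and using the eigenvalue relation $\LL\Phi_{\mu\nu}=(2|\nu|+n)\Phi_{\mu\nu}$, the factor $(2k+n)$ produced by $\partial_t$ is absorbed into the test function as $P_k(\LL\phi)$; the resulting series converges absolutely since $\phi$ is Schwartz, giving $\frac{d}{dt}\langle e^{-it\LL}f,\phi\rangle = -i\langle e^{-it\LL}f,\LL\phi\rangle$. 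Differentiating the Duhamel term produces its boundary value $-i\langle G(\cdot,t,u),\phi\rangle$ at $s=t$ plus an interior integral which, combined with the free part through the identity $-i\int_0^t e^{-i(t-s)\LL}G\,ds = u - e^{-it\LL}f$, assembles into $-i\langle u,\LL\phi\rangle$. Collecting terms yields the weak equation above, and $u(\cdot,0)=f$ is immediate from (\ref{solution}). The continuity of $t\mapsto\langle u(\cdot,t),\phi\rangle$ that makes the derivative meaningful follows from (\ref{mainstrest}) and the retarded estimate (\ref{retstrest}), which in fact give $u\in C(I;L^2)$.

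Backward direction (differential $\Rightarrow$ integral): fix $t$ and $\phi\in\mathcal{S}$, and introduce the time dependent test function $\phi_s=e^{i(t-s)\LL}\phi$, which remains in $\mathcal{S}(\C^n)$ (equivalently in $\bigcap_m D(\LL^m)$) for all $s$, since $e^{i\tau\LL}$ commutes with $\LL$ and preserves the Schwartz space. Setting $h(s)=\langle u(\cdot,s),\phi_s\rangle$ on $[0,t]$ and differentiating, the contribution of $\partial_s\phi_s=-i\LL\phi_s$ produces $+i\langle u,\LL\phi_s\rangle$, which cancels the term $-i\langle u,\LL\phi_s\rangle$ furnished by the weak equation applied with the frozen test function $\phi_s$, leaving $h'(s)=-i\langle G(\cdot,s,u),\phi_s\rangle$. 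Integrating from $0$ to $t$ and using $\phi_t=\phi$ together with the adjoint identities $\langle f,e^{it\LL}\phi\rangle=\langle e^{-it\LL}f,\phi\rangle$ and $\langle G(\cdot,s,u),\phi_s\rangle=\langle e^{-i(t-s)\LL}G(\cdot,s,u),\phi\rangle$, I obtain $\langle u(\cdot,t),\phi\rangle - \langle e^{-it\LL}f,\phi\rangle = -i\int_0^t\langle e^{-i(t-s)\LL}G(\cdot,s,u),\phi\rangle\,ds$, which is (\ref{solution}) tested against $\phi$; density of $\mathcal{S}(\C^n)$ in $L^2$ then upgrades this to an identity in $L^2$ for every $t$.

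The main obstacle is justifying the product rule for $h(s)=\langle u(\cdot,s),\phi_s\rangle$ at this low regularity, where $u$ has only the weak time derivative supplied by the equation while $\phi_s$ depends smoothly on $s$. I would establish $h'(s)=\langle \partial_s u,\phi_s\rangle + \langle u,\partial_s\phi_s\rangle$ by writing the difference quotient $h(s+\delta)-h(s)=\langle u(\cdot,s+\delta)-u(\cdot,s),\phi_{s+\delta}\rangle+\langle u(\cdot,s),\phi_{s+\delta}-\phi_s\rangle$ and passing to the limit, using that $s\mapsto\langle u(\cdot,s),\psi\rangle$ is absolutely continuous with derivative controlled uniformly for $\psi$ in bounded subsets of $\mathcal{S}$, together with the Lipschitz bound $\|\phi_{s+\delta}-\phi_s\|_2 \leq |\delta|\,\|\LL\phi\|_2$. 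Once this product rule is in place the two $\LL$-terms cancel as above, and the remaining integrability points are routine consequences of Proposition \ref{Gest} and Theorem \ref{strichartzs}.
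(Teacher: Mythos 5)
Your proof is correct in outline, but it takes a genuinely different route from the paper's, most visibly in the direction (PDE $\Rightarrow$ integral equation). The paper's strategy is to first establish the two distributional derivative formulas (\ref{derivativef}) and (\ref{derivativeG}) — testing against space-time functions in $C_c^{\infty}(\C^n\times I)$ and, for the Duhamel term, approximating $G$ by smooth compactly supported $G_m$ and differentiating an $L^2$-valued integral, then passing to the limit via (\ref{stauxest}) — and then, for the backward direction, to form $v=u-e^{-it\LL}f+i\int_0^t e^{-i(t-s)\LL}G\,ds$, observe that $v$ solves the homogeneous linear equation with $v(\cdot,0)=0$, and invoke uniqueness for the linear problem (which the paper asserts rather than proves). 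You instead work with fixed spatial test functions $\phi\in\mathcal{S}(\C^n)$, prove the forward direction by termwise spectral differentiation, and dispose of the backward direction with the adjoint-propagator trick $h(s)=\langle u(\cdot,s),e^{i(t-s)\LL}\phi\rangle$, $h'(s)=-i\langle G(\cdot,s,u),\phi_s\rangle$. Your argument is self-contained exactly where the paper is terse: your duality computation is, in effect, a proof of the linear uniqueness that the paper takes for granted. The price is the technical overhead you identify yourself: you must know that $e^{i\tau\LL}$ preserves $\mathcal{S}(\C^n)$ (or else work in $\bigcap_m D(\LL^m)$, in which case the weak formulation must first be extended to such test functions by a density argument), and you must justify the product rule for $h$ at low regularity; note in particular that the a.e.\ differentiability of $s\mapsto\langle u(\cdot,s),\psi\rangle$ supplied by the weak equation comes with an exceptional null set depending on $\psi$, and since $\psi=\phi_s$ varies with $s$, the clean fix is to sum increments over partitions (or mollify in time) using the uniform bounds you state, rather than literally freezing the test function at each $s$. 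The paper's smoothing of the nonlinearity sidesteps these subtleties, at the cost of leaving its final uniqueness step implicit.
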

\begin{proof}  First observe that  the  following equalities 
\bea\label{derivativef}
\frac{\partial}{\partial_t}(e^{-it\LL}f)&=& -i\LL 
e^{-it\LL}f\\\label{derivativeG}
\frac{\partial}{\partial_t}\displaystyle\int_0^t e^{-i(t-s)\LL}
G(z,s)ds&=&-i\LL\displaystyle\int_0^t e^{-i(t-s)\LL}G(z,s)ds+G(z,t)
\eea
are valid in the distribution sense for 
$f \in L^2(\C^n)$, $G \in L^{q'}(I, L^{p'})$. Using these we now show the equivalence of 
the initial value problem (\ref{mainpde}), (\ref{inidata}) and the integral equations (\ref{solution}).

Note that $G(z,s,u(z,s)) 
\in  L^{q'}(I, \tilde{W}^{1,p'}(\C^n))$, whenever $u \in L^\infty(I, \tilde{W}^{1,2}(\C^n)) 
\cap L^q(I, \tilde{W}^{1,p}(\C^n))$ by Proposition \ref{Gest}. Hence if such a $u$
 satisfies (\ref{solution}) then using  (\ref{derivativef}) and 
(\ref{derivativeG}), we conclude that $u$ satisfies (\ref{mainpde}) 
and (\ref{inidata}).

On the otherhand, if $u$ satisfies  (\ref{mainpde}) and (\ref{inidata}) 
then the function $v$ given by
$$v(z,t)=u(z,t)-e^{-it\LL}f+i\displaystyle\int_0^t e^{-i(t-s)\LL}G(z,s)ds,$$
satisfies \Bea
i\partial_tv(z,t)-\LL v(z,t)=0,  \\
v(z,0)=0.
\Eea
The unique solution to this linear problem is given by 
$v(z,t)=e^{-it\LL}v(z,0) \equiv 0$ since $v(z,0)=0$. Therefore
$u$ satisfies (\ref{solution}).

Now we prove (\ref{derivativef}) and (\ref{derivativeG}).  Let $\phi \in C_c^{\infty}\left(\C^n\times I \right)$.
Since $I$ is an open interval, supp $\phi\subset A\times B$, for some compact set $A\subset \C^n$ and 
some compact interval $B\subset I$.
Clearly, 
$$\frac{\partial}{\partial_t}(e^{-it\LL}\bar{\phi})=
e^{-it\LL}\frac{\partial}{\partial_t}\bar{\phi}-e^{-it\LL}i \LL \bar{ \phi} .$$
Also  since $\phi(z,\cdot)$ has compact support in $ I$ for each $z$,  $\label{vanint} \int_I \frac{\partial}{\partial_t}(e^{-it\LL}\phi)dt=0,$
hence
\bea \label{intequl} \int_I e^{-it\LL}  \frac{\partial}{\partial_t} \overline{\phi} dt=\int_I e^{-it\LL}  i \LL \overline{\phi} dt
= i ~\, \overline{ \int_I e^{it\LL}   \LL \phi dt}.\eea  
Using this and the pairing $\langle f, \varphi\rangle = \int f \bar{\varphi}$, we see that
\begin{eqnarray*}
\int_{\C^n \times I}  e^{-it\LL}f(z)\,  \frac{\partial}{\partial_t} \overline{\phi (z,t)} \, dz dt  &=& 
\left \langle e^{-it\LL}f, \frac{\partial}{\partial_t}\phi \right \rangle
=  \left \langle f, e^{it\LL}\frac{\partial}{\partial_t}\phi \right \rangle\\
&=& \left\langle f,-i e^{it\LL} \LL \phi \right\rangle 
= \left\langle  i\LL e^{-it\LL}f,\phi \right\rangle.
\end{eqnarray*}
This proves (\ref{derivativef}) in the distribution sense.

To prove (\ref{derivativeG}), choose a sequence $\{G_m\}$ 
in $C_c^{\infty}\left(A\times B \right)$ such that 
$G_m\rightarrow G$ in $L^{q'}(B,L^{p'}(A))$. 
Note that $G_m \in L^2(A \times B)$ hence, $$ \underset{h \to 0}{\lim} \, \frac{1}{h}\left [e^{-i(t+h-s)\LL}-e^{-i(t-s)\LL}\right]G_m(z,s)
= -i\LL \,  e^{-i(t-s)\LL}G_m(z,s)$$ and $\underset{s\to t}{\lim} \, e^{-i(t-s)\LL}G_m(z,s) = G_m(z,t)$
where both the limits are taken in $L^2(\C^n)$ sense. Thus as an $L^2(\C^n)$ valued 
integral on $I$, we have
\begin{eqnarray} \label{5.4}
 \frac{\partial}{\partial_t} && \!\!\!\!\!\!\!\!\!\!\!\!\!  
\int_0^t   e^{-i(t-s)\LL}G_m(z,s)ds \nonumber \\
&=& \underset{h\to 0}{\lim} \frac{1}{h} \left(\displaystyle
\int_0^{t+h} e^{-i(t+h-s)\LL}G_m(z,s)ds-\displaystyle\int_0^t 
e^{-i(t-s)\LL}G_m(z,s)ds\right)  \nonumber  \\
&=& \underset{h\to 0}{\lim}  \frac{1}{h}\displaystyle
\int_0^{t+h} (e^{-i(t+h-s)\LL}-e^{-i(t-s)\LL})G_m(z,s)ds  \nonumber \\
&& +\underset{h\to 0}{\lim} \frac{1}{h}
\displaystyle\int_t^{t+h} e^{-i(t-s)\LL}G_m(z,s)ds  \nonumber \\
&=& -i\LL \displaystyle\int_0^t e^{-i(t-s)\LL}G_m(z,s)ds+G_m(z,t).
\end{eqnarray}
Observe that 
$\displaystyle\int_0^t e^{-i(t-s)\LL}
G_m(z,s)ds\rightarrow \displaystyle\int_0^t e^{-i(t-s)\LL}G(z,s)ds$ 
in $L^q\left(B;L^p(A)\right)$ as $m\rightarrow \infty$. This follows from estimate (\ref{stauxest}) since $B$ is a bounded interval.
Thus using (\ref{5.4}), we see that 
\Bea
\left\langle \displaystyle
\int_0^t e^{-i(t-s)\LL}G(z,s)ds,\frac{\partial}{\partial_t}\phi \right\rangle 
\!\!\!\!\!&=\underset{m\to \infty}{\lim} \left\langle 
\displaystyle\int_0^t e^{-i(t-s)\LL}G_m(z,s)ds,
\frac{\partial}{\partial_t}\phi \right\rangle\\
&=\underset{m\to \infty}{\lim} \left\langle 
-\frac{\partial}{\partial_t}\displaystyle\int_0^t 
e^{-i(t-s)\LL}G_m(z,s)ds,\phi \right\rangle\\
&=\underset{m\to \infty}{\lim} 
\left\langle i\LL\displaystyle\int_0^t e^{-i(t-s)\LL}G_m(z,s)ds
-G_m(z,t),\phi \right\rangle\\
&=\underset{m\to \infty}{\lim} \left\langle 
\displaystyle\int_0^t e^{-i(t-s)\LL}G_m(z,s)ds,- i\LL\phi 
\right\rangle - \langle G(z,t),\phi \rangle\\
&=-\left\langle \displaystyle\int_0^t 
e^{-i(t-s)\LL}G(z,s)ds,i\LL\phi \right\rangle-\langle G(z,t),\phi \rangle.
\Eea
This shows that (\ref{derivativeG}) holds in the distribution sense.
\end{proof}

\end{document}